\setlist[enumerate,1]{label=\rm(\arabic*)}
\setlist[enumerate,2]{label=\rm(\alph*)}
\setlist[enumerate,3]{label=\rm(\roman*)}
\newcommand{\iso}{\cong}
\newcommand{\ie}{\textit{i.e.}}
\newcommand{\resp}{\textit{resp.}}
\newcommand{\viz}{\textit{viz.}}
\newcommand{\ignore}[1]{\relax}
\newcommand{\bN}{\mathbb{N}}
\newcommand{\bZ}{\mathbb{Z}}
\newcommand{\nbd}{\nobreakdash}
\newcommand{\id}{\ensuremath{\mathrm{id}}}
\newcommand{\tensor}{\otimes}
\newcommand{\nix}{\, \hbox{-} \,}
\numberwithin{equation}{section}
\numberwithin{section}{part}   
\newtheorem{theorem}[equation]{Theorem}
\newtheorem{corollary}[equation]{Corollary}
\newtheorem{proposition}[equation]{Proposition}
\newtheorem{lemma}[equation]{Lemma}
\theoremstyle{definition}
\newtheorem{definition}[equation]{Definition}
\newtheorem{claim}[equation]{Claim}
\let\oldtocsubsection=\tocsubsection
\renewcommand{\tocsubsection}[2]{\hspace{3.5em}\(\cdot\)~\oldtocsubsection{#1}{#2}}
\newcommand{\cC}{\mathcal{C}}
\newcommand{\cF}{\mathcal{F}}
\newcommand{\cK}{\mathcal{K}}
\newcommand{\cO}{\mathcal{O}}
\newcommand{\cS}{\mathcal{S}}
\newcommand{\cW}{\mathcal{W}}
\newcommand{\cX}{\mathcal{X}}
\newcommand{\cY}{\mathcal{Y}}
\newcommand{\cZ}{\mathcal{Z}}
\newcommand{\inv}{^{-1}}
\newcommand{\sdot}{\cS_{\bullet}}
\newcommand{\R}{R}
\newcommand{\Rpn}[1]{R_{\geq {#1}}}
\newcommand{\Rp}{R_{\geq 0}}
\newcommand{\Rnn}[1]{R_{\leq {#1}}}
\newcommand{\Rn}{R_{\leq 0}}
\newcommand{\bP}{\mathbb{P}}
\newcommand{\pp}{\bP^{1}}
\newcommand{\qco}{\mathfrak{QCoh}(\pp)}
\newcommand{\ch}{\mathrm{Ch}^\flat\,}
\newcommand{\vb}{\mathrm{Vect}(\pp)}
\newcommand{\chm}{\ch \mathrm{\bf M}(R_0)}
\newcommand{\chp}{\ch \mathrm{\bf P}(R_0)}
\newcommand{\coker}{\mathrm{coker}}
\newcommand{\invlim}{\lim\limits_{\leftarrow}}
\newcommand{\pou}[1]{partition of unity of type $(#1, -#1)$}
\newcommand{\apou}[1]{partition of unity of type $(-#1, #1)$}
\newcommand{\hh}[2]{H^{#1} (\pp, #2)}
\newcommand{\Hh}[2]{H^{#1} \big(\pp, #2\big)}
\newcommand{\tw}[3]{#1(#2,\,#3)}
\newcommand{\Tw}[3]{#1\big(#2,\,#3\big)}
\newcommand{\OO}[2]{\tw{\cO} {#1} {#2}}
\renewcommand{\rho}{\varrho}
\renewcommand{\theta}{\vartheta}
\begin{document}

\title[The projective line associated with a strongly $\bZ$-graded ring]%
{The algebraic $K$-theory of the projective line\\associated with a strongly $\bZ$-graded ring}

\date{\today}

\author{Thomas H\"uttemann}
\author{Tasha Montgomery}

\address{Thomas H\"uttemann and Tasha Montgomery\\ Queen's University
  Belfast\\ School of Mathematics and Physics\\ Mathematical Sciences
  Research Centre\\ Belfast BT7~1NN\\ UK}

\email{t.huettemann@qub.ac.uk}

\urladdr{https://t-huettemann.github.io/}

\urladdr{https://pure.qub.ac.uk/en/persons/thomas-huettemann}

\subjclass[2010]{Primary 19D55; Secondary 16W50, 19E99}

\maketitle

\begin{abstract}
  A \textsc{Laurent} polynomial ring $R_{0}[t,t\inv]$ with
  coefficients in a unital ring determines a category of
  quasi-coherent sheaves on the projective line over~$R_{0}$; its
  $K$-theory is known to split into a direct sum of two copies of the
  $K$-theory of~$R_{0}$. In this paper, the result is generalised to
  the case of an arbitrary strongly $\bZ$-graded ring~$\R$ in place of
  the \textsc{Laurent} polynomial ring. The projective line associated
  with~$\R$ is indirectly defined by specifying the corresponding
  category of quasi-coherent sheaves. Notions from algebraic geometry
  like sheaf cohomology and twisting sheaves are transferred to the new
  setting, and the $K$-theoretical splitting is established.
\end{abstract}

\tableofcontents

\part*{Introduction}

\subsection*{The projective line and its algebraic $K$-theory}

The projective line $\pp = \pp_{\R_{0}}$ over a commutative unital
ring~$R_0$ is, by definition, the scheme $\mathrm{Proj}\,R_0[x,y]$; it
is covered by 
open subsets
$\mathrm{Spec}\,R_0[t]$ and $\mathrm{Spec}\,R_0[t\inv]$ with
intersection $\mathrm{Spec}\,R_0[t,t\inv]$, where $t = xy^{-1}$. The
category of quasi-coherent sheaves of $\cO_{\pp}$\nbd-modules on the
projective line is equivalent to a category of diagrams of the form
\begin{displaymath}
  M^- \rTo M^0 \lTo M^+
\end{displaymath}
with entries modules over the rings $R_0[t\inv]$, $R_0[t,t\inv]$
and~$R_0[t]$, respectively, subject to a certain gluing
condition. Even when $R_0$ fails to be commutative we can {\it
  define\/} a quasi-coherent sheaf on the projective line to be such a
diagram. This generalisation has been considered in the context of
algebraic $K$-theory by \textsc{Bass} \cite[\S{}XII.9]{MR0249491} and
\textsc{Quillen} \cite[\S{}8]{MR0338129}, who showed that there is a
splitting of \textsc{Quillen} $K$-groups
\begin{equation*}
  K_q (\pp) \iso K_q (R_0) \oplus K_q (R_0) \tag{\(\dagger\)}
\end{equation*}
for all $q \geq 0$, where the left-hand side denotes the $K$-groups of
a category of diagrams as above subject to suitable finiteness
conditions. One can even go further and replace the (\textsc{Laurent})
polynomial rings by twisted (\textsc{Laurent}) polynomial rings with
respect to an automorphism of~$R_{0}$. This has been observed by
\textsc{Grayson} \cite[Theorem~1.1]{MR929766}, and an explicit proof
containing full technical details has been made available by
\textsc{Yao} \cite[\S1]{MR1325783}.

\medbreak

A more drastic generalisation may be attempted, starting from the
observation that the \textsc{Laurent} polynomial ring $R_0[t,t\inv]$
is a $\bZ$-graded ring, and that the polynomial rings $R_0[t\inv]$ and
$R_0[t]$ are the subrings generated by elements of non-positive
\resp{} non-negative degrees. It will be shown in this paper that the
$K$\nbd-theoretical splitting result above is still valid in this
generalised situation, provided the $\bZ$-graded ring in question is
in fact \textit{strongly} $\bZ$-graded. Strong gradings were
introduced by \textsc{Dade} \cite{MR593823} to capture and generalise
the quint\-essential properties of group rings; as in the original
representation-theoretic applications, it is the strongly graded
structure rather than the special form of group rings that controls
the splitting behaviour of algebraic $K$-theory.

\medbreak

It might be worthwhile to illustrate the gain in generality by an
explicit example.  For $K$ a field the (commutative) ring
\begin{equation*}
  \R = K[A,B,C,D] \big/ (AB+CD-1) \ ,
  \tag{\(\ddagger\)}
\end{equation*}
graded by $\deg(A) = \deg(C) = 1$ and $\deg(B) = \deg(D) = -1$, is a
strongly $\bZ$\nbd-graded ring; it can be shown that $\R$ contains
only the trivial units $K^{\times} \subset \R$ so that $\R$~is not a
crossed product, and certainly not a (twisted) \textsc{Laurent}
polynomial ring. Other examples of strongly $\bZ$-graded rings include
\textsc{Leavitt} algebras of type~$(1,n)$, or more generally
\textsc{Leavitt} path algebras associated with finite graphs without
sink (\textsc{Hazrat} \cite[Theorem~3.15]{MR3096577}).  The splitting
result~($\dagger$) remains valid for the projective lines associated
with these rings.

\medbreak

The proof follows broadly the pattern laid out by \textsc{Quillen}
\cite[\S{}8]{MR0338129}, formulated in terms of \textsc{Waldhausen}'s
$\mathcal{S}_\mathbf{\cdot}$\nbd-construction \cite{MR802796}. One
difference to note is that we eschew the use of regular sheaves in
favour of a simpler notion of vector bundles without higher sheaf
cohomology. Moreover, there are major deviations from the template in
several places necessitated by the much larger class of rings under
consideration. The increased generality leads to unexpected
phenomena. For example, we encounter a double-indexed family of
non-isomorphic twisting sheaves $\OO k \ell$ on the projective line in
place of the more familiar single-indexed family
$\mathcal{O} (k + \ell)$; the resulting twist operation leads, in
effect, to a $\bZ$\nbd-action on the collection of cohomology modules
of a given sheaf (Theorem~\ref{thm:cohomology_of_twist}).

Compared to the existing treatments of twisted \textsc{Laurent}
polynomial rings the graded techniques are simple and elegant,
avoiding all unpleasantries with changing module structures by the
prescribed ring automorphism. This indicates that strongly graded
rings are the ``right'' generalisation of groups rings, even in
contexts not originally considered by \textsc{Dade}.

\subsection*{Motivation}

This paper is part of the second author's PhD thesis. The results are
crucial for establishing the fundamental theorem for the algebraic
$K$\nbd-theory of strongly $\bZ$\nbd-graded rings
(\textsc{H\"uttemann} \cite{huettemann2020fundamental}), relating the
algebraic $K$\nbd-groups of $\R = \bigoplus_{k \in \bZ} \R_{k}$ with
subgroups and quotients of the algebraic $K$\nbd-groups of~$\R_{0}$
determined by the graded structure. The projective line has also been
used by \textsc{Steers} and the first author to promote
results on algebraic finite domination of chain complexes from
\textsc{Laurent} polynomial rings to the class of strongly
$\bZ$\nbd-graded rings \cite{HS}.

\subsection*{Acknowledgements}

The idea to consider the projective space associated with a strongly
$\bZ$\nbd-graded ring was developed during a research visit of the
first author to Beijing Institute of Technology. Their hospitality and
financial support are greatly appreciated.

\part{On strongly $\bZ$-graded rings}

\section{Conventions and notation used in the paper}
\label{sec:conventions}

A ``ring'' will always mean an associative unital ring. The term
``module'', if unspecified, always refers to a unital right module
over the ring in question.

Let $\R = \bigoplus_{\bZ} R_{k}$ be a unital $\bZ$-graded ring. As
usual, the elements of~$R_{k}$ are called the {\it homogeneous
  elements of degree~$k$}. It is known that $R_{0}$ is a subring
of~$\R$ with the same unit element as~$\R$
\cite[Proposition~1.4]{MR593823}.  The two sets
\begin{displaymath}
  \Rn = \bigoplus_{n \leq 0} R_{n} \qquad \text{and} \qquad \Rp =
  \bigoplus_{n \geq 0} R_{n}
\end{displaymath}
are $\bZ$\nbd-graded subrings of~$\R$. We write $\Rnn k$ for the
$\Rn$\nbd-module $\bigoplus_{n \leq k} R_{n}$, and $\Rpn {-\ell}$ for
the $\Rp$\nbd-module $\bigoplus_{n \geq -\ell} R_{n}$.

The reader may want to keep the following motivating example in mind:
$R_{0}$ a commutative unital ring, $\R = R_{0}[t,t\inv]$ the
\textsc{Laurent} polynomial ring in one indeterminate,
$\Rn = R_{0}[t\inv]$ and $\Rp=R_{0}[t]$ the polynomial rings. Then
$\Rpn {-\ell} = t^{-\ell} R_{0}[t]$ and $\Rnn k = t^{k} R_{0}[t\inv]$.
--- A more complicated example, requiring the full force of strongly
graded techniques to be introduced presently, is the graded ring
$R = K [A,B,C,D]/(AB+CD-1)$ introduced in~(\(\ddagger\)) above.

\section{Strongly graded rings}
\label{sec:basic_properties}

Let $\R = \bigoplus_{k \in \bZ} R_{k}$ be a $\bZ$\nbd-graded ring.  A
{\it \pou k} is a finite sum expression
$1 = \sum_{j} \lambda_{j} \rho_{j}$ with $\lambda_{j} \in R_{k}$ and
$\rho_{j} \in R_{-k}$. Partitions of unity may or may not exist in
general. --- Following \textsc{Dade} \cite{MR593823} we call $\R$ a
{\it strongly $\bZ$-graded ring} if $R_{k} R_{\ell} = R_{k+\ell}$ for
all $k, \ell \in \bZ$, where the left-hand side consists of all finite
sums of products $ab$ with $a \in R_{k}$ and $b \in R_{\ell}$.

\begin{proposition}[Characterisation of strongly graded rings;
  see \textsc{Dade} {\cite[Proposition~1.6]{MR593823}}, and also
  \textsc{H\"uttemann} and \textsc{Steers}
  {\cite[Proposition~1.5]{HS}}]
  \label{prop:characterisation_strongly_graded}
  The following statements are equivalent:
  \begin{enumerate}
  \item \label{item:1} The $\bZ$\nbd-graded ring \(\R\) is strongly graded.
  \item \label{item:2} For every \(k\in\bZ\) there is at least one
    partition of unity of type \((k,-k)\).
  \item \label{item:3} There is at least one \pou 1, and at least one
    \apou 1.\qed
  \end{enumerate}
\end{proposition}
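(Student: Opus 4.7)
The plan is to prove the cycle of implications $(1)\Rightarrow(2)\Rightarrow(3)\Rightarrow(2)\Rightarrow(1)$, with $(2)\Rightarrow(3)$ being immediate upon specialising $k=1$ and $k=-1$.

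First, the implication $(1)\Rightarrow(2)$ is direct: strong grading yields $R_{k}R_{-k}=R_{0}$, and since $1\in R_{0}$ one can write $1$ as a finite sum of products $\lambda_{j}\rho_{j}$ with $\lambda_{j}\in R_{k}$ and $\rho_{j}\in R_{-k}$, which is precisely a \pou{k}. For $(2)\Rightarrow(1)$, the inclusion $R_{k}R_{\ell}\subseteq R_{k+\ell}$ is automatic from the grading, and for the reverse I would take any $r\in R_{k+\ell}$, fix a \pou{k} \ $1=\sum_{j}\lambda_{j}\rho_{j}$, and expand
\begin{displaymath}
  r \;=\; 1\cdot r \;=\; \sum_{j}\lambda_{j}(\rho_{j}r)\ .
\end{displaymath}
Since $\rho_{j}r\in R_{-k}R_{k+\ell}\subseteq R_{\ell}$, this exhibits $r$ as an element of $R_{k}R_{\ell}$.

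The substantive step is $(3)\Rightarrow(2)$, which I would handle by induction on $|k|$, treating the cases $k\geq 0$ and $k\leq 0$ in parallel. The cases $k=0$ (the trivial decomposition $1=1\cdot 1$) and $k=\pm 1$ (given by hypothesis) provide the base. For the inductive step with $k\geq 2$, suppose $1=\sum_{i}a_{i}b_{i}$ is a \pou{1} and $1=\sum_{j}c_{j}d_{j}$ is an inductively constructed \pou{k-1}. The key manoeuvre is to insert the second decomposition between the factors of the first:
\begin{displaymath}
  1 \;=\; \sum_{i} a_{i}\,b_{i} \;=\; \sum_{i} a_{i}\cdot 1\cdot b_{i}
  \;=\; \sum_{i,j} (a_{i}c_{j})(d_{j}b_{i})\ .
\end{displaymath}
By the grading, $a_{i}c_{j}\in R_{k}$ and $d_{j}b_{i}\in R_{-k}$, producing a \pou{k}. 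The same insertion trick, with the roles of $R_{1}$ and $R_{-1}$ swapped, handles negative~$k$.

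The main obstacle I expect is precisely in locating this insertion trick, i.e.\ realising that the natural way to climb from degree $\pm 1$ up to arbitrary degree $\pm k$ is not to multiply partitions of unity directly (the product $\bigl(\sum a_{i}b_{i}\bigr)\bigl(\sum c_{j}d_{j}\bigr)$ gives factors of the wrong degrees, namely $R_{1}R_{-1}R_{k-1}R_{-(k-1)}$), but rather to interpolate a copy of~$1$ \emph{between} the two factors of each summand so that the homogeneous degrees regroup correctly. Once this observation is in hand, the remaining arguments are formal verifications.
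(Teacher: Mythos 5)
Your proof is correct, and the key manoeuvre you call the ``insertion trick'' is indeed exactly the right idea; it is the argument found in \textsc{Dade}'s original Proposition~1.6. Note, however, that the paper itself presents this proposition \emph{without} proof (the \textsc{qed}-mark immediately follows the statement, signalling that the reader is referred to \cite{MR593823} and \cite{HS}), so there is no in-text proof to compare against --- your write-up would serve as a faithful expansion of the cited argument.

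A few small verifications, all of which check out: for $(1)\Rightarrow(2)$ one does need $1\in R_{0}$, which the paper records as a basic fact about unital graded rings, so $1\in R_{0}=R_{k}R_{-k}$ is legitimate. In $(2)\Rightarrow(1)$ the containment $\rho_{j}r\in R_{-k}R_{k+\ell}\subseteq R_{\ell}$ uses only the grading, not strong grading, so there is no circularity. In $(3)\Rightarrow(2)$ the degree bookkeeping $a_{i}c_{j}\in R_{1}R_{k-1}\subseteq R_{k}$ and $d_{j}b_{i}\in R_{-(k-1)}R_{-1}\subseteq R_{-k}$ is correct, and the symmetric argument (outer decomposition of type $(-1,1)$, inner of type $(-(|k|-1),|k|-1)$) handles negative~$k$. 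The base case $k=0$ via $1=1\cdot 1$ is fine. One cosmetic remark: your chain $(1)\Rightarrow(2)\Rightarrow(3)\Rightarrow(2)\Rightarrow(1)$ establishes $(2)\Rightarrow(3)$ twice over; the cleaner logical skeleton is simply $(1)\Rightarrow(2)\Rightarrow(3)$ together with $(3)\Rightarrow(2)\Rightarrow(1)$, but this does not affect correctness.
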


We now discuss the basic properties of strongly graded rings. Though
most are well known, we include some proofs to emphasise the benefits
of strong grading.

\begin{lemma}
  \label{lem:projective_modules}
  Let $\R$ be a strongly $\bZ$\nbd-graded ring, and let $n \in
  \bZ$.
  The homogeneous component $R_{n}$ is a finitely generated projective
  $R_{0}$\nbd-module. The $\Rp$\nbd-module $\Rpn {-n}$ is finitely
  generated projective, as is the $\Rn$\nbd-module $\Rnn n$. These
  statements are true for both left and right module versions.
\end{lemma}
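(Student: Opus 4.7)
The key input is Proposition~\ref{prop:characterisation_strongly_graded}: for every $k \in \bZ$ we have a \pou k, say $1 = \sum_{j=1}^{N} \lambda_{j} \rho_{j}$ with $\lambda_{j} \in R_{k}$ and $\rho_{j} \in R_{-k}$, and symmetrically (applying the proposition to $-k$) a \apou k. The plan is to use such a partition of unity to construct an explicit splitting, realising the module in question as a direct summand of a finitely generated free module.

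For the first claim, that $R_{n}$ is a finitely generated projective right $R_{0}$\nbd-module, I take a \pou n, $1 = \sum_{j} \lambda_{j} \rho_{j}$, and define right $R_{0}$\nbd-linear maps $\phi \colon R_{n} \to R_{0}^{N}$ by $\phi(x) = (\rho_{1} x,\dots,\rho_{N} x)$, using $R_{-n} R_{n} \subseteq R_{0}$, and $\psi \colon R_{0}^{N} \to R_{n}$ by $\psi(r_{1},\dots,r_{N}) = \sum_{j} \lambda_{j} r_{j}$, using $R_{n} R_{0} \subseteq R_{n}$. The defining equation $\sum_{j} \lambda_{j} \rho_{j} = 1$ shows $\psi \phi = \id_{R_{n}}$, exhibiting $R_{n}$ as a direct summand of $R_{0}^{N}$. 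For the left $R_{0}$\nbd-module version I run the same argument on the other side, using an \apou n instead.

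For $R_{\geq -n}$ as a right $R_{\geq 0}$\nbd-module I use the same recipe, but now with the partition of unity chosen of type $(-n,n)$, writing $1 = \sum_{j} \mu_{j} \nu_{j}$ with $\mu_{j} \in R_{-n}$ and $\nu_{j} \in R_{n}$. The map $\phi \colon R_{\geq -n} \to (R_{\geq 0})^{N}$, $x \mapsto (\nu_{1} x,\dots,\nu_{N} x)$, lands in $R_{\geq 0}$ because $R_{n} R_{\geq -n} \subseteq R_{\geq 0}$, and $\psi \colon (R_{\geq 0})^{N} \to R_{\geq -n}$, $(r_{j}) \mapsto \sum_{j} \mu_{j} r_{j}$, lands in $R_{\geq -n}$ because $R_{-n} R_{\geq 0} \subseteq R_{\geq -n}$. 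Again $\psi \phi = \id$. The corresponding left-module statement uses a \pou n, and the two statements about $R_{\leq n}$ as an $R_{\leq 0}$\nbd-module are established by exactly the symmetric choices, swapping the roles of $n$ and $-n$ throughout.

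The work here is essentially bookkeeping; there is no single ``hard step''. The only thing to watch is that on each side of the module structure one must pick the partition of unity of the correct type so that the elements $\lambda_{j},\rho_{j}$ sit on the appropriate side of the elements of the module, and so that the two composites $\phi \circ \psi$ and $\psi \circ \phi$ respect the required grading bounds. Once this is arranged, the four statements of the lemma follow from four applications of the same three-line splitting argument.
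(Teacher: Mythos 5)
Your proof is correct and is essentially the same as the paper's: both use a partition of unity of the appropriate type to exhibit the module as a retract of a finitely generated free module. The paper packages this slightly differently by invoking the dual basis theorem (giving the maps $\beta_j\colon x\mapsto\rho_j x$ and generators $\lambda_j$ with $\sum_j\lambda_j\beta_j=\id$), whereas you write out the explicit splitting $\psi\phi=\id$ into a free module, but these are the same argument.
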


\begin{proof}
  We will deal with the second statement (right module version only);
  the first part has an almost identical proof, the third part is true
  by symmetry. Let $1 = \sum_{j} \lambda_{j} \rho_{j}$ be a \apou
  n. Then we have right $\Rp$\nbd-linear maps
  \begin{displaymath}
    \beta_{j} \colon \Rpn {-n} \rTo \Rp \ , \quad x \mapsto \rho_{j} x
  \end{displaymath}
  such that $\sum_{j} \lambda_{j} \beta_{j}$ is the identity map
  of~$\Rpn {-n}$. By the dual basis theorem \cite[\S{}II.2 No.~6,
  Proposition~12]{MR1727844} this means that $\Rpn {-n}$ is a finitely
  generated projective $\Rp$\nbd-module (with generators~$\lambda_{j}$).
\end{proof}

\begin{corollary}
  \label{cor:flat}
  Suppose that $\R$ is a strongly $\bZ$\nbd-graded ring. Then $R$ is a
  flat left $\Rp$-module, and a flat left $\Rn$-module.
\end{corollary}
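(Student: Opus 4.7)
The plan is to exhibit $R$ as a filtered colimit of finitely generated projective left $\Rp$-modules, and then invoke the standard fact that filtered colimits of flat modules are flat.

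More precisely, I would first observe that as a left $\Rp$-module the ring $R$ decomposes as
\[
R \;=\; \bigcup_{n \geq 0} \Rpn{-n} \;=\; \varinjlim_{n \to \infty} \Rpn{-n},
\]
where the colimit is taken over the obvious inclusions $\Rpn{-n} \hookrightarrow \Rpn{-(n+1)}$ of left $\Rp$-submodules of~$R$. This is just a reformulation of $R = \bigoplus_{k \in \bZ} R_{k}$, split as a union of the ``truncated'' pieces $\bigoplus_{k \geq -n} R_{k}$.

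Next, I would apply the left-module version of Lemma~\ref{lem:projective_modules} to conclude that each $\Rpn{-n}$ is a finitely generated projective left $\Rp$-module, hence in particular flat. Since a filtered colimit (indeed, a directed union) of flat modules over any ring is again flat, it follows that $R$ is flat as a left $\Rp$-module. The argument for $\Rn$ is entirely symmetric, using instead the filtration $R = \varinjlim_{n \to \infty} \Rnn{n}$ by left $\Rn$-submodules, each of which is finitely generated projective by the same lemma.

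There is no real obstacle here: the only mild point to verify is that the inclusions $\Rpn{-n} \hookrightarrow \Rpn{-(n+1)}$ are indeed left $\Rp$-linear (which is immediate from the grading) and that their colimit in left $\Rp$-modules coincides with~$R$ (which is clear since colimits in module categories are computed on underlying abelian groups). Everything else is invoking the preceding lemma and a standard homological fact.
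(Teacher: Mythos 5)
Your proof is correct and follows essentially the same route as the paper: express $R$ as the nested union (directed colimit) $\bigcup_{n\geq 0}\Rpn{-n}$, invoke Lemma~\ref{lem:projective_modules} for projectivity of each $\Rpn{-n}$ as a left $\Rp$-module, and conclude flatness of the colimit. The extra care you take in checking that the inclusions are $\Rp$-linear and that the colimit is $R$ is fine but not needed beyond a remark.
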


\begin{proof}
  The left $\Rp$-modules $\Rpn {-n}$ are projective, by
  Lemma~\ref{lem:projective_modules}, hence their nested union
  $\bigcup_{n \geq 0} \Rpn {-n} = R$ is flat. The other statement has
  a symmetric proof.
\end{proof}

\begin{lemma}
  \label{lem:bimodules}  
  Let $\R$ be a strongly $\bZ$\nbd-graded ring, and let
  $k, \ell \in \bZ$. The multiplication map
  $\mu \colon a \tensor b \mapsto ab$ yields isomorphisms
  \begin{align}
    \Rnn \ell \tensor_{R_{0}} R_k & \iso \Rnn {\ell+k} && \text{(as
      \(\Rn\)-\(R_{0}\)-bimodules),} \label{iso:neg0-n} \\
    \Rnn k \tensor_{\Rn} \R & \iso \R && \text{(as
      \(\Rn\)-\(\R\)-bimodules),} \label{iso:negn-R} \\
    \Rnn \ell \tensor_{\Rn} \Rnn k & \iso \Rnn {\ell+k} && \text{(as
      \(\Rn\)-\(\Rn\)-bimodules),} \label{iso:negm-negn} \\
    R_{k} \tensor_{R_{0}} R_{\ell} & \iso R_{k+\ell} && \text{(as
      \(R_{0}\)-\(R_{0}\)-bimodules),} \label{iso:m-n} \\
    \Rpn {-k} \tensor_{\Rp} \Rpn {-\ell} & \iso \Rpn {-k-\ell} &&
    \text{(as \(\Rp\)-\(\Rp\) bimodules),} \label{iso:posm-posn} \\
    \Rpn  {-\ell} \tensor_{\Rp} \R & \iso \R && \text{(as
      \(\Rp\)-\(\R\)-bimodules),} \label{iso:posn-R}\\
    \Rpn {-k} \tensor_{R_{0}} R_{-\ell} & \iso \Rpn {-k-\ell} && \text{(as
      \(\Rp\)-\(R_{0}\)-bimodules).} \label{iso:pos0-n}
  \end{align}
  In particular, $\Rnn k$ is an invertible $\Rn$-bimodule with
  inverse~$\Rnn {-k}$ by~\eqref{iso:negm-negn}, $R_{k}$ is an
  invertible $R_{0}$\nbd-bi\-module with inverse~$R_{-k}$
  by~\eqref{iso:m-n}, and $\Rpn {-k}$ is an invertible $\Rp$-bimodule
  with inverse~$\Rpn k$ by~\eqref{iso:posm-posn}.
\end{lemma}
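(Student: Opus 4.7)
The plan is to construct, for each of the seven isomorphisms, a two-sided inverse to $\mu$ built from a suitable partition of unity; the formula \eqref{iso:m-n} serves as prototype, and the other six follow from the same recipe with the parameters adapted to the source of the map. For \eqref{iso:m-n}, the multiplication map is manifestly an $R_{0}$-bimodule homomorphism, and is surjective because $R_{k+\ell} = R_{k}R_{\ell}$ by strong grading. Fix a \pou k, say $1 = \sum_{j} \lambda_{j} \rho_{j}$ with $\lambda_{j} \in R_{k}$ and $\rho_{j} \in R_{-k}$, and define $\nu \colon R_{k+\ell} \to R_{k} \tensor_{R_{0}} R_{\ell}$ by $c \mapsto \sum_{j} \lambda_{j} \tensor \rho_{j} c$; this is well-defined since $\rho_{j} c \in R_{-k+k+\ell} = R_{\ell}$. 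Then $\mu \circ \nu = \id$ is immediate from $\sum_{j} \lambda_{j} \rho_{j} = 1$, while on a simple tensor $\nu(\mu(a \tensor b)) = \sum_{j} \lambda_{j} \tensor \rho_{j} a b = \sum_{j} \lambda_{j} \rho_{j} a \tensor b = a \tensor b$, where $\rho_{j} a \in R_{0}$ has been transported across the tensor.

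For the six remaining isomorphisms the same recipe applies, with the partition chosen to match the source. For \eqref{iso:negm-negn}, for instance, use a \pou \ell: then $\lambda_{j} \in R_{\ell} \subseteq \Rnn \ell$, and for $c$ of degree $\leq \ell + k$ the product $\rho_{j} c$ has degree $\leq k$, so $\rho_{j} c \in \Rnn k$; the element $\rho_{j} a \in R_{-\ell} \cdot \Rnn \ell \subseteq \Rn$ may legitimately be transported across the $\Rn$-tensor in the verification of $\nu \circ \mu = \id$. The cases \eqref{iso:neg0-n} and \eqref{iso:negn-R} are handled with the same partition, and the three positive counterparts \eqref{iso:pos0-n}, \eqref{iso:posm-posn}, \eqref{iso:posn-R} are treated symmetrically by choosing a \pou k with $\lambda_{j} \in R_{-k} \subseteq \Rpn{-k}$ and $\rho_{j} \in R_{k}$.

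The concluding invertibility claim is then obtained by specialising: \eqref{iso:m-n} with $\ell \mapsto -k$ yields $R_{k} \tensor_{R_{0}} R_{-k} \iso R_{0}$, and analogous substitutions in \eqref{iso:negm-negn} and \eqref{iso:posm-posn} produce the appropriate unit bimodules. The main difficulty is not conceptual but clerical: in each of the seven cases one must verify that the chosen partition factors land in the correct homogeneous submodule of the source and that the dual factors $\rho_{j}$ lie in the base ring of the relevant tensor product so that the displacement across the tensor is permitted. Strong grading, via Proposition~\ref{prop:characterisation_strongly_graded}, guarantees the existence of partitions of every required type, so no further hypothesis is needed.
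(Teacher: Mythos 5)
Your overall strategy --- constructing explicit two-sided inverses to $\mu$ from partitions of unity --- is the same as the paper's, and the prototype computation for \eqref{iso:m-n} is sound. The one genuine gap is that the single formula $\nu(c)=\sum_{j}\lambda_{j}\tensor\rho_{j}c$, with the variable element $c$ placed in the \emph{second} tensor factor, cannot be carried over to all seven cases. In \eqref{iso:neg0-n} and \eqref{iso:pos0-n} the second tensor factor is a single homogeneous component ($R_{k}$, \resp{} $R_{-\ell}$), but the argument $c$ ranges over an entire tail ($\Rnn{\ell+k}$, \resp{} $\Rpn{-k-\ell}$); hence $\rho_{j}c$ is spread over a range of degrees and does \emph{not} lie in $R_{k}$ (\resp{} $R_{-\ell}$), no matter which partition you fix. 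In particular your assertion that \eqref{iso:neg0-n} is ``handled with the same partition'' as \eqref{iso:negm-negn} is false as written.

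The cure is simply to mirror the formula for those two cases: place $c$ in the \emph{first} factor, $\nu(c)=\sum_{j}cx_{j}\tensor y_{j}$ with $x_{j}\in R_{-k}$ and $y_{j}\in R_{k}$, so that the factor $y_{j}$ sits in the single homogeneous component while $cx_{j}$ has the required bounded degree. This mirrored map is exactly the inverse $\rho$ the paper exhibits for \eqref{iso:neg0-n}. With that adjustment your argument agrees with the paper's in all essentials; you are also right (if only implicitly) that there is no need to check bimodule-linearity of $\nu$ separately, since $\mu$ is manifestly a bimodule map and $\nu$ is its two-sided set-theoretic inverse --- a check the paper performs explicitly but which is, strictly speaking, redundant.
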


\begin{proof}
  We prove \eqref{iso:neg0-n} only, the other statements have similar
  proofs. Let $1 = \sum_{j} x_{j} y_{j}$ be a \apou k. The map
  \begin{displaymath}
    \rho \colon \Rnn {\ell + k} \rTo \Rnn \ell \tensor_{R_{0}} R_k \ , \quad r
    \mapsto \sum_j rx_j \tensor y_j
  \end{displaymath}
  is an \(\Rn\)-\(R_{0}\)-bimodule
  map; indeed, linearity on the left is immediate from the definition,
  while for $r \in \Rnn {\ell + k}$ and $s \in R_{0}$ we calculate
  (using $y_k s x_j \in R_{0}$ at the third equality sign)
  \begin{multline*}
    \rho(rs) = \sum_j rsx_j \tensor y_j = \sum_j \sum_k rx_k y_k s x_j
    \tensor y_j \\ = \sum_k \sum_j r x_k \tensor y_k s x_j y_j = \sum_k r
    x_k \tensor y_k s = \rho(r) \cdot s \ .
  \end{multline*}
  We have $\mu \rho (r) = \sum_j r x_j y_j = r$, and also, for
  $r \in \Rnn {\ell}$ and $s \in R_k$,
  \begin{displaymath}
    \rho \mu (r \tensor s) = \rho (rs) = \sum_j rs x_j \tensor y_j =
    \sum_j r \tensor s x_j y_j = r \tensor s
  \end{displaymath}
  so that $\mu$ is an isomorphism with inverse~$\rho$.
\end{proof}

\begin{proposition}
  \label{prop:0-all}
  Let $\R$ be a strongly $\bZ$\nbd-graded ring, let $M$ be an
  $\Rn$-module, and let $k \in \bZ$. The map
  \begin{displaymath}
    \Omega_{M} \colon M \tensor_{R_{0}} R_{k} \rTo M \tensor_{\Rn} \Rnn k
    \ , \quad x \tensor y \mapsto x \tensor y
  \end{displaymath}
  of right $R_{0}$\nbd-modules is an isomorphism. Similarly, there are
  isomorphisms of right $R_{0}$-modules
  $N \tensor_{R_{0}} R_{-\ell} \rTo N \tensor_{\Rp} \Rpn {-\ell}$ and
  $Q \tensor_{R_{0}} R_{k} \rTo Q \tensor_{\R} \R$ for any
  $\Rp$\nbd-module~$N$ and any $\R$\nbd-module~$Q$, and any integer
  $\ell \in \bZ$.
\end{proposition}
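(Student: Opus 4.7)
Proof plan: The strategy is to build an explicit two-sided inverse for $\Omega_{M}$ using a partition of unity, along the lines of the proof of Lemma~\ref{lem:bimodules}. I treat the $\Rn$-module case in detail; the other two statements will follow by the same argument.

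Fix a \apou k, say $1 = \sum_{j} x_{j} y_{j}$ with $x_{j} \in R_{-k}$ and $y_{j} \in R_{k}$. The candidate inverse is
\begin{displaymath}
  \Psi_{M} \colon M \tensor_{\Rn} \Rnn k \rTo M \tensor_{R_{0}} R_{k} \ , \quad m \tensor r \mapsto \sum_{j} m r x_{j} \tensor y_{j} \ .
\end{displaymath}
This makes sense because $r x_{j} \in \Rnn k \cdot R_{-k} \subset \Rn$, so the product $m r x_{j}$ is defined via the $\Rn$-action on~$M$. First one checks $\Psi_{M}$ is well-defined on the balanced tensor product: for $s \in \Rn$ the identity $\Psi_{M}(ms \tensor r) = \Psi_{M}(m \tensor sr)$ reduces to commuting an element of $R_{0}$ across the tensor sign, exactly as in the calculation for~$\rho$ in Lemma~\ref{lem:bimodules}.

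The two composition identities are then short. For $\Psi_{M} \circ \Omega_{M}$ applied to an elementary tensor $x \tensor y$ with $y \in R_{k}$, one obtains $\sum_{j} x y x_{j} \tensor y_{j}$; since $y x_{j} \in R_{k} R_{-k} \subset R_{0}$, this equals $\sum_{j} x \tensor y x_{j} y_{j} = x \tensor y$. For $\Omega_{M} \circ \Psi_{M}$ applied to $m \tensor r$ one gets $\sum_{j} m r x_{j} \tensor y_{j}$ in $M \tensor_{\Rn} \Rnn k$, and now $r x_{j} \in \Rn$ moves across the tensor to yield $\sum_{j} m \tensor r x_{j} y_{j} = m \tensor r$.

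Conceptually, what is being used is the $\Rn$-$R_{0}$-bimodule isomorphism $\Rn \tensor_{R_{0}} R_{k} \iso \Rnn k$ from equation~\eqref{iso:neg0-n} (with $\ell = 0$), and one may alternatively package the whole argument as the chain of natural isomorphisms
\begin{displaymath}
  M \tensor_{R_{0}} R_{k} \iso M \tensor_{\Rn} \Rn \tensor_{R_{0}} R_{k} \iso M \tensor_{\Rn} \Rnn k \ ,
\end{displaymath}
whose composite is easily seen to agree with $\Omega_{M}$. The $\Rp$-module statement is proved identically using~\eqref{iso:pos0-n} in place of~\eqref{iso:neg0-n}; the $\R$-module statement reduces, via the canonical isomorphism $Q \tensor_{\R} \R \iso Q$, to showing that the multiplication map $Q \tensor_{R_{0}} R_{k} \to Q$ is an isomorphism, inverted by $q \mapsto \sum_{j} q x_{j} \tensor y_{j}$ using the right $\R$-action on~$Q$. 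The only real obstacle is notational: keeping straight the four rings $R_{0}$, $\Rn$, $\Rp$, $\R$ and the directions of the various bimodule actions; nothing deeper than the partition of unity is needed.
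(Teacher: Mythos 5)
Your proof is correct, but it takes a genuinely different route from the paper. The paper first proves the statement for $M = \Rn$ by identifying both sides with $\Rnn k$ via the multiplication isomorphisms of Lemma~\ref{lem:bimodules}, then passes to free modules by taking direct sums, and finally handles a general $M$ by choosing a free presentation $G \to F \to M \to 0$, tensoring, and invoking the five lemma. You instead write down an explicit two-sided inverse $\Psi_M$ directly for an arbitrary $M$, using a \apou{k}; this avoids the free-presentation and five-lemma machinery altogether and is arguably more elementary and self-contained. Your alternative packaging via $M \tensor_{R_{0}} R_{k} \iso M \tensor_{\Rn} \Rn \tensor_{R_{0}} R_{k} \iso M \tensor_{\Rn} \Rnn k$, appealing to~\eqref{iso:neg0-n} and associativity of tensor products, is cleaner still and makes the naturality in~$M$ completely transparent. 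One tiny quibble: the balancedness $\Psi_M(ms \tensor r) = \Psi_M(m \tensor sr)$ for $s \in \Rn$ is even more immediate than you suggest --- both sides equal $\sum_j m(srx_j) \tensor y_j$ by mere associativity of the $\Rn$-action, with no need to shuttle an $R_0$-element across the tensor sign; the $R_0$-commuting trick is what you (correctly) use in the composition identities instead. Your treatment of the $\Rp$- and $\R$-module cases by the same pattern is likewise fine.
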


\begin{proof}
  For $M = \Rn$ this is true since both source and target are
  isomorphic, {\it via\/} the multiplication map
  $m \tensor r \mapsto mr$, to $\Rnn k$ by~\eqref{iso:neg0-n}
  and~(\ref{iso:negm-negn}). By taking direct sums the claim follows
  for $M$ a free $\Rn$\nbd-module (with a specified basis, possibly
  infinite). The general case follows from considering a free
  $\Rn$\nbd-module presentation of~$M$: choose free modules $G$
  and~$F$ and maps so as to obtain an exact sequence
  $G \rTo F \rTo M \rTo 0$, and produce from this a commutative
  diagram
  \begin{diagram}
    G \tensor_{R_{0}} R_{k} & \rTo & F \tensor_{R_{0}} R_{k} & \rTo &
    M \tensor_{R_{0}} R_{k} & \rTo & 0 \\
    \dTo>{\Omega_{G}} && \dTo>{\Omega_{F}} && \dTo>{\Omega_{M}} \\
    G \tensor_{\Rnn 0} \Rnn k & \rTo & F \tensor_{\Rnn 0} \Rnn k &
    \rTo & M \tensor_{\Rnn 0} \Rnn k & \rTo & 0
  \end{diagram}
  which has exact rows since tensor products are right exact. We know
  already that $\Omega_{G}$ and~$\Omega_{F}$ are isomorphisms, hence
  so is~$\Omega_{M}$ by the five lemma. --- The remaining statements
  are proved in a similar fashion.
\end{proof}

\part{The projective line of a strongly $\bZ$-graded ring}

From now on, we assume for the remainder of the paper that
$\R = \bigoplus_{k \in \bZ} R_{k}$ is a strongly $\bZ$\nbd-graded
ring.

\section{Sheaves on the projective line}
\label{sec:sheaves}

\begin{definition}
  A {\it quasi-coherent sheaf on~$\pp$}, or just {\it sheaf} for
  short, is a diagram
  \begin{equation}
    \label{eq:sheaf}
    \cY = \quad \Big( Y^{-} \rTo^{\upsilon^{-}} Y^{0} \lTo^{\upsilon^{+}}
    Y^{+} \Big)
  \end{equation}
  where $Y^{-}$, $Y^{0}$ and $Y^{+}$ are modules over $\Rn$, $\R$
  and~$\Rp$, respectively, with an $\Rn$\nbd-linear homomorphisms
  $\upsilon^{-}$ and an $\Rp$\nbd-linear homomorphism $\upsilon^{+}$,
  such that the diagram of the adjoint $\R$\nbd-linear maps
  \begin{equation}
    \label{eq:sheaf_cond}
    Y^{-} \tensor_{\Rn} \R \rTo[l>=3em]^{\upsilon^{-}_\sharp}_{\iso} Y^{0}
    \lTo[l>=3em]^{\upsilon^{+}_{\sharp}}_{\iso} Y^{+} \tensor_{\Rp} \R
  \end{equation}
  consists of isomorphisms. This latter condition will be referred to
  as the \textit{sheaf condition}.
\end{definition}

These quasi-coherent sheaves form a category $\qco$ in an obvious way;
a morphism $f \colon \cY \rTo \cZ$ is a triple
$f = (f^{-}, f^{0}, f^{+})$ of linear maps (of modules over different
rings) compatible with the structure maps $\upsilon^{\pm}$ of~$\cY$
and the structure maps~$\zeta^{\pm}$ of~$\cZ$ in the sense that
$f^{0} \circ \upsilon^{\pm} = \zeta^{\pm} \circ f^{\pm}$. In fact,
{\it $\qco$ is an \textsc{abel}ian category}. Kernels and cokernels
are computed pointwise as in diagram categories; the only non-trivial
thing to verify is that, given a morphism $f \colon \cY \rTo \cZ$ of
sheaves, the diagram
\begin{displaymath}
  \ker (f) = \ \Big( \ker (f^{-}) \rTo^{\kappa^{-}} \ker (f^{0})
  \lTo^{\kappa^{+}} \ker (f^{+}) \Big)
\end{displaymath}
is a sheaf itself. This follows from the five lemma, applied to the
diagram below.
\begin{diagram}[textflow]
  0 & \rTo & \ker (f^{-}) \tensor_{\Rn} \R & \rTo & Y^{-}
  \tensor_{\Rn} \R & \rTo & Z^{-} \tensor_{\Rn} \R \\
  && \dTo>{\kappa^{-}} && \dTo>{\iso} && \dTo>{\iso} \\
  0 & \rTo & \ker (f^{0}) & \rTo & Y^{0} & \rTo & Z^{0}
\end{diagram}
The top row is exact as $\R$ is a flat left $\Rn$\nbd-module by
Corollary~\ref{cor:flat}; the second and third vertical maps are
isomorphisms as $\cY$ and~$\cZ$ are sheaves. This forces $\kappa^{-}$
to be an isomorphism as well. The argument works {\it mutatis
  mutandis\/} for~$\kappa^{+}$. --- Working in an
\textsc{abel}ian category we have a canonical notion of exactness at
our disposal: a sequence of sheaves and morphisms
$0 \rTo \cX \rTo^{g} \cY \rTo^{f} \cZ \rTo 0$ is (short) exact if for
all decorations $? \in \{-,0,+\}$ the sequence
\begin{displaymath}
  0 \rTo X^{?} \rTo^{g^{?}} Y^{?} \rTo^{f^{?}} Z^{?} \rTo 0
\end{displaymath}
is an exact sequence of $R_{0}$\nbd-modules.

\section{Vector bundles and twisting sheaves}
\label{sec:twisting}

\begin{definition}
  The sheaf $\cY$ of~\eqref{eq:sheaf} is called {\it locally finitely
    generated\/} if its constituent modules are finitely generated as
  modules over their respective ground rings. We call $\cY$ a {\it
    vector bundle\/} if its constituent modules are finitely generated
  projective modules over their respective ground rings. The category
  of vector bundles (and all morphisms of sheaves between them) is
  denoted by~$\vb$.
\end{definition}

Important examples of vector bundles are the {\it twisting sheaves\/}
\begin{displaymath}
  \OO k \ell = \quad \Big( \Rnn k \rTo^{\subseteq} \R \lTo^{\supseteq}
  \Rpn {-\ell} \Big)
\end{displaymath}
where $k$ and~$\ell$ are integers. These objects are sheaves by
Lemma~\ref{lem:bimodules}, and are indeed vector bundles by
Lemma~\ref{lem:projective_modules}. For us $\OO k \ell$ plays the
r\^ole of what is denoted by $\mathcal{O}_{\pp} (k + \ell)$ in
algebraic geometry. It should be emphasised that in general the
twisting sheaves as defined here depend on both $k$ and~$\ell$ and not
just on the sum $k + \ell$. If, however, $\R$~is a crossed product
(equivalently, if $\R$ contains an invertible homogeneous element of
degree~$1$) then the sum $k+\ell$ determines $\OO k \ell$ up to
isomorphism in~$\vb$.

\begin{proposition}
  \label{prop:cartesian_square}
  Let $k, \ell \in \bZ$. The commuting square of twisting sheaves and
  inclusion maps
  \begin{diagram}
    \OO {k} {\ell} & \rTo^{\rho} & \OO {k} {\ell+1} \\
    \dTo<{\lambda} && \dTo<{\lambda} \\
    \OO {k+1} {\ell} & \rTo[l>=3em]^{\rho} & \OO {k+1} {\ell+1}
  \end{diagram}
  is cartesian in the sense that there results a short exact sequence
  of sheaves
  \begin{multline}
    \label{eq:cartesian}
    0 \rTo \OO {k} {\ell} \rTo[l>=3em]^{\left( {\lambda \atop \rho} \right)}
    \OO {k+1} {\ell} \oplus \OO {k} {\ell+1} \\ \rTo[l>=4em]^{(-\rho, \lambda)}
    \OO {k+1} {\ell+1} \rTo 0 \ .
  \end{multline}
\end{proposition}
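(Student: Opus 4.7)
The plan is to exploit the pointwise construction of (co)kernels, and hence of exactness, in the abelian category $\qco$ --- a fact already established in Section~\ref{sec:sheaves}. It therefore suffices to check that evaluating the sequence~\eqref{eq:cartesian} at each of the three decorations $? \in \{-, 0, +\}$ produces a short exact sequence of modules over the relevant ground ring. In particular, the sheaf condition never has to be verified during the proof: both the middle term (as a direct sum of vector bundles) and the outer terms are known to be sheaves already.

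The only slightly delicate matter is the bookkeeping. At position~$0$, both $\lambda$ and $\rho$ evaluate to the identity of~$\R$, so the resulting sequence reads $0 \to \R \to \R \oplus \R \to \R \to 0$ with structure maps $x \mapsto (x,x)$ and $(a,b) \mapsto -a + b$; this is split exact by inspection. At position~$-$, the map $\rho$ evaluates to the identity of~$\Rnn k$ (only the ``$+$'' component of $\OO k \ell$ changes under $\rho$), while $\lambda$ evaluates to the inclusion $\iota \colon \Rnn k \subseteq \Rnn{k+1}$. The resulting sequence is
\[ 0 \to \Rnn k \to \Rnn{k+1} \oplus \Rnn k \to \Rnn{k+1} \to 0, \]
with maps $x \mapsto (\iota(x), x)$ and $(a,b) \mapsto -a + \iota(b)$; this is a standard ``inclusion'' split short exact sequence whose exactness is immediate (with splitting $c \mapsto (-c, 0)$ on the right, and the kernel of the second map clearly being $\{(\iota(b), b) : b \in \Rnn k\}$, which equals the image of the first map). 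The argument at position~$+$ is entirely analogous, with the r\^oles of $\lambda$ and~$\rho$ interchanged and the inclusion $\Rpn{-\ell} \subseteq \Rpn{-\ell-1}$ playing the part of~$\iota$.

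There is really no serious obstacle here: the content of the proposition lies entirely in the componentwise criterion for exactness in~$\qco$, after which one is reduced three times to the elementary observation that for a submodule inclusion $A \subseteq B$ the sequence $0 \to A \to B \oplus A \to B \to 0$ with maps $a \mapsto (\iota(a), a)$ and $(b, a') \mapsto -b + \iota(a')$ is split exact. The main thing to guard against is sign errors and muddled signs in the identification of which structural map degenerates to an identity at which position; once this bookkeeping is settled, the proof writes itself.
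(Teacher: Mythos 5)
Your proof is correct and follows essentially the same approach as the paper: reduce exactness in $\qco$ to a componentwise check, identify which of $\lambda,\rho$ degenerate to identities at each decoration, and observe that the resulting module sequences are elementary split exact sequences. The paper merely writes out the ``$-$''-component and leaves the other two to the reader, whereas you spell out the bookkeeping; the mathematical content is the same.
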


\begin{proof}
  Restricting to ``$-$''-components, the sequence~\eqref{eq:cartesian}
  becomes the sequence of $\Rn$\nbd-modules
  \begin{displaymath}
    0 \rTo \Rnn k \rTo^{\left( {1 \atop 1} \right)} \Rnn {k+1} \oplus
    \Rnn k \rTo^{(-1 \ 1)} \Rnn {k+1} \rTo 0
  \end{displaymath}
  which is clearly exact. Similarly for the other two components.
\end{proof}

\begin{definition}
  Let $\cY$ be a sheaf, and let $k, \ell \in \bZ$. We define the
  {\it $(k,\ell)${\rm th} twist of~$\cY$}, denoted $\tw \cY k \ell$,
  to be the sheaf
  \begin{displaymath}
    \tw \cY k \ell = \quad \Big( Y^{-} \tensor_{\Rn} \Rnn k \rTo Y^{0} 
    \tensor_{R} R \lTo Y^{+} \tensor_{\Rp} \Rpn {-\ell} \Big) \ ,
  \end{displaymath}
  with structure maps induced by those of~$\cY$ and the inclusion
  maps.
\end{definition}

One should maybe think of $\cY(k,\ell)$ as the ``($k+\ell$)th twist
of~$\cY$'' in the sense of algebraic geometry. --- Twisting is
functorial, and is in fact exact: a short exact sequence of sheaves
\begin{displaymath}
  0 \rTo \cX \rTo \cY \rTo \cZ \rTo 0
\end{displaymath}
is transformed into another short exact sequence
\begin{displaymath}
  0 \rTo \tw \cX k \ell \rTo \tw \cY k \ell \rTo \tw \cZ k \ell \rTo 0
  \ .
\end{displaymath}
This is true as each entry in the twisted version is obtained from the
untwisted one by taking the tensor product with a projective, hence
flat, module over the relevant ring,
cf.~Lemma~\ref{lem:projective_modules}. Moreover, there are
isomorphisms, natural in the sheaf~$\cY$,
\begin{equation}
  \label{eq:twisting}
  \Tw {\tw \cY k \ell} {k'} {\ell'} \iso \tw \cY {k+k'} {\ell + \ell'}
  \quad\text{and}\quad \tw \cY 0 0 \iso \cY \ ,
\end{equation}
as follows immediately from the isomorphisms listed in
Lemma~\ref{lem:bimodules}.

\begin{lemma}
  \label{lem:cF_onto}
  Let $\cY$ be a locally finitely generated sheaf. Then there exists a
  vector bundle $\cF$ and an epimorphism $\cF \rTo \cY$, where $\cF$
  is a finite direct sum of twisting sheaves $\OO k \ell$ with $k,\ell
  \leq 0$ and $k+\ell < 0$.
\end{lemma}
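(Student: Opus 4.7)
My strategy is to build~$\cF$ as a finite direct sum of twisting sheaves, with one summand for each generator of~$Y^-$ and each generator of~$Y^+$. Since~$\cY$ is locally finitely generated I may pick $\Rn$\nbd-module generators $y^-_1, \ldots, y^-_p$ of~$Y^-$ and $\Rp$\nbd-module generators $y^+_1, \ldots, y^+_q$ of~$Y^+$. For each~$i$ I plan to produce a morphism $f_i \colon \OO{0}{\ell_i} \to \cY$ with $\ell_i < 0$ sufficiently small, whose $-$\nbd-component maps $1 \mapsto y^-_i$ and whose $0$\nbd-component maps $1 \mapsto \upsilon^-(y^-_i)$; symmetrically, for each~$j$ I plan to produce $g_j \colon \OO{k_j}{0} \to \cY$ with $k_j < 0$ sufficiently small, whose $+$\nbd-component and $0$\nbd-component send~$1$ to~$y^+_j$ and~$\upsilon^+(y^+_j)$ respectively.

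The subtle part of defining~$f_i$ is its $+$\nbd-component $f_i^+ \colon \Rpn{-\ell_i} \to Y^+$, which must be compatible with~$f_i^0$ under~$\upsilon^+$. Using that $\upsilon^+_\sharp \colon Y^+ \tensor_{\Rp} \R \to Y^0$ is surjective (part of the sheaf condition), I write $\upsilon^-(y^-_i) = \sum_h \upsilon^+(z_{ih}) \cdot r_{ih}$ with $z_{ih} \in Y^+$ and $r_{ih} \in \R$. Each~$r_{ih}$ is a finite sum of homogeneous elements, so there exists a positive integer~$m_i$ such that $r_{ih} \cdot a$ lies in~$\Rp$ whenever~$a$ is homogeneous of degree~$\geq m_i$. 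Setting $\ell_i := -m_i$, the formula $f_i^+(a) := \sum_h z_{ih} \cdot r_{ih} a$ extends right $\Rp$\nbd-linearly to a well-defined map $\Rpn{m_i} \to Y^+$, and the required compatibility $\upsilon^+ \bigl( f_i^+(a) \bigr) = \upsilon^-(y^-_i) \cdot a$ then holds by direct computation. The morphism~$g_j$ is constructed by the mirror argument.

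Setting $\cF := \bigoplus_i \OO{0}{\ell_i} \oplus \bigoplus_j \OO{k_j}{0}$ yields a vector bundle by Lemma~\ref{lem:projective_modules}, all of whose summands satisfy $k, \ell \leq 0$ and $k+\ell < 0$. To show that the induced morphism $\cF \to \cY$ is an epimorphism, I verify surjectivity componentwise: the image of the $-$\nbd-component contains $\sum_i y^-_i \cdot \Rn = Y^-$, the image of the $+$\nbd-component contains $\sum_j y^+_j \cdot \Rp = Y^+$, and the image of the $0$\nbd-component contains $\sum_i \upsilon^-(y^-_i) \cdot \R$, which equals~$Y^0$ because the $y^-_i$ generate~$Y^-$ and~$\upsilon^-_\sharp$ is surjective. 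I expect the construction of~$f_i^+$ to be the main obstacle, since $\upsilon^+$ may fail to be injective and the decomposition of~$\upsilon^-(y^-_i)$ via~$\upsilon^+_\sharp$ is not unique; these are handled by the explicit formula, whose well-definedness and compatibility are checked by direct calculation rather than by appeal to a universal property.
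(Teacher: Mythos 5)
Your proof is correct, and it takes a genuinely different route from the paper's. The paper builds three families of surjections — one onto each of $Y^0$, $Y^+$, and $Y^-$ — and each family is obtained by constructing a single generator map $\OO 0 0 \to \tw{\cY}{n^-}{n^+}$ (lifting one element via the nested-union description of the adjoints) and then untwisting, so the auxiliary components of the sheaf map are produced automatically by the twisting formalism with no explicit formulas to check. You instead use only generators of $Y^-$ and $Y^+$, construct the maps $f_i$, $g_j$ by hand, and observe that the $0$\nobreakdash-component is automatically hit: since $\upsilon^-_\sharp$ is onto and the $y^-_i$ generate $Y^-$ over $\Rn$, the elements $\upsilon^-(y^-_i)$ generate $Y^0$ over $\R$, so the family for $Y^0$ in the paper's proof is redundant. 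That is a real simplification of the counting. The trade-off is that you must write down and verify $f_i^+$ explicitly — in particular the $\Rp$\nobreakdash-linearity of $a \mapsto \sum_h z_{ih}(r_{ih}a)$ on $\Rpn{m_i}$ and the compatibility $\upsilon^+(f_i^+(a)) = \upsilon^-(y_i^-)\,a$, both of which you correctly reduce to the choice of $m_i$ making $r_{ih}a \in \Rp$ and to right-linearity of $\upsilon^+$ — whereas the paper outsources all of this to the already-established isomorphisms of Lemma~\ref{lem:bimodules} and the exactness of twisting. Both arguments are sound; the paper's is more uniform with the machinery it develops, yours is more elementary and slightly leaner in the number of summands.

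One small presentational caveat: the phrase ``extends right $\Rp$\nobreakdash-linearly'' is a little misleading, since $\Rpn{m_i}$ need not be cyclic over $\Rp$; what you actually do is define $f_i^+$ on all of $\Rpn{m_i}$ at once by the displayed formula and then check linearity, which is what makes the verification legitimate.
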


\begin{proof}
  Let $y \in Y^{0}$ be an arbitrary element. As tensor products
  commute with nested unions, and as $\cY$ satisfies the sheaf
  condition~\eqref{eq:sheaf_cond}, we can consider $y$ as an element
  in the \textsc{abel}ian group
  \begin{displaymath}
    Y^{0} \iso Y^{+} \tensor_{\Rp} \R = Y^{+} \tensor_{\Rp}
    \bigcup_{n^{+}  \geq 0} \Rpn {-n^{+}} = \bigcup_{n^{+} \geq 0}
    Y^{+} \tensor_{\Rp} \Rpn {-n^{+}} \ ;
  \end{displaymath}
  that is, we find $n^{+} > 0$ and an element
  $y^{+} \in Y^{+} \tensor_{\Rp} \Rpn {-n^{+}}$ such that $y$ is the
  image of~$y^{+}$ under the map
  $z \tensor r \mapsto \upsilon^{+}(z) r$. Similarly, we find
  $n^{-} > 0$ and an element
  $y^{-} \in Y^{-} \tensor_{\Rn} \Rnn {n^{-}}$ such that $y$ is the
  image of~$y^{-}$ under the map
  $z \tensor r \mapsto \upsilon^{-}(z) r$. We thus get a map of
  sheaves
  $g = (g^{-}, g^{0}, g^{+}) \colon \OO 0 0 \rTo \tw {\cY} {n^{-}}
  {n^{+}}$
  determined by sending the elements~$1$ of $\Rn$, $\R$ and~$\Rp$ to
  $y^{-}$, $y$ and~$y^{+}$, respectively. Write $k = -n^{-} < 0$ and
  $\ell = -n^{+} < 0$; by twisting we obtain a map of sheaves
  \begin{displaymath}
    g(k, \ell) = f = (f^{-}, f^{0}, f^{+}) \colon \OO k \ell \rTo 
    \tw {\tw {\cY} {n^{-}} {n^{+}}} {k} {\ell} \iso \cY
  \end{displaymath}
  such that $y$ is in the image of the homomorphism $f^{0} =
  g^{0}$.
  As $Y^{0}$ is finitely generated as an $\R$\nbd-module, it follows
  that there is a finite direct sum as in the statement of the lemma
  mapping surjectively onto the $Y^{0}$-component.

  Similarly, given $y^{+} \in Y^{+}$ we find $n^{-} > 0$ and
  $y^{-} \in Y^{-} \tensor_{\Rn} \Rnn {n^{-}}$ such that
  $\upsilon^{+}(y^{+})$ is the image of~$y^{-}$ under the map
  $z \tensor r \mapsto \upsilon^{-}(z) r$. We thus obtain a map of
  sheaves $\OO 0 0 \rTo \tw {\cY} {n^{-}} 0$ by sending the
  elements~$1$ to $y^{-}$, $\upsilon^{+}(y^{+})$ and~$y^{+}$,
  respectively. By twisting, this yields a map of sheaves
  $\OO k 0 \rTo \cY$, where $k = -n^{-} < 0$, such that $y^{+}$ is in
  the image of the map of $+$\nbd-components. As $Y^{+}$ is finitely
  generated as an $\Rp$\nbd-module, it follows that there is a finite
  direct sum as in the statement of the lemma mapping surjectively
  onto the $Y^{+}$-component.

  This argument works {\it mutatis mutandis\/ for the
    ``$-$''\nbd-com\-ponent }. Combining the three resulting
  surjections thus constructed yields the requisite map of sheaves.
\end{proof}

\section{Sheaf cohomology}
\label{sec:cohomology}

\begin{definition}
  Given a sheaf
  $\cY = \Big( Y^{-} \rTo^{\upsilon^{-}} Y^{0} \lTo^{\upsilon^{+}}
  Y^{+} \Big)$
  we define its (\textsc{\v Cech}) cohomology $R_{0}$\nbd-modules by
  setting $\hh q \cY = 0$ for $q \geq 2$, and by
  \begin{displaymath}
    \hh 0 \cY = \ker (\upsilon^{-} - \upsilon^{+}) \qquad \text{and}
    \qquad \hh 1 \cY = \coker (\upsilon^{-} - \upsilon^{+}) \ .
  \end{displaymath}
\end{definition}

We thus have a canonical exact sequence of $R_{0}$\nbd-modules
\begin{equation}
  \label{eq:canonical}
  0 \rTo \hh 0 \cY \rTo Y^{-} \oplus Y^{+}
  \rTo[l>=4em]^{\upsilon^{-}-\upsilon^{+}} Y^{0} \rTo \hh 1 \cY \rTo 0
\end{equation}
which defines the cohomology modules up to canonical isomorphism.
Note that $\hh q \cY = \invlim{}^{\!q}
\cY$ for all $q \geq 0$, where we consider $\cY$ as a diagram of
$R_{0}$\nbd-modules. Consequently, a short exact sequence
\begin{displaymath}
  0 \rTo \cX \rTo \cY \rTo \cZ \rTo 0
\end{displaymath}
of sheaves yields a not-very-long exact sequence of
$R_{0}$\nbd-modules
\begin{multline}
  \label{eq:not-so-long}
  0 \rTo \hh 0 \cX \rTo \hh 0 \cY \rTo \hh 0 \cZ \\ \rTo \hh 1 \cX
  \rTo \hh 1 \cY \rTo \hh 1 \cZ \rTo 0\ .
\end{multline}
We may refer to $\hh 0 \cY$ occasionally as the {\it global
  sections\/} of~$\cY$.

By direct inspection we obtain the following calculation of cohomology
modules of the twisting sheaves:

\goodbreak

\begin{proposition}
  \label{prop:H_of_On}
  Let $k, \ell \in \bZ$.
  \begin{enumerate}
  \item If $k + \ell \geq 0$ then $\Hh 1 {\OO k \ell}$ is trivial, and
    there is an isomorphism
    $\Hh 0 {\OO k \ell} \iso \bigoplus_{n=-\ell}^{k} R_{n}$.
  \item If $k + \ell < 0$ then $\Hh 0 {\OO k \ell}$ is trivial, and
    there is an isomorphism $\Hh 1 {\OO k \ell} \iso
    \bigoplus_{n=k+1}^{-\ell-1} R_{n}$.
  \end{enumerate}
  In particular, $\Hh 0 {\OO k \ell}$ and $\Hh 1 {\OO k \ell}$ are
  finitely generated projective $R_{0}$\nbd-modules.\qed
\end{proposition}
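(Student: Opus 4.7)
The plan is to compute the cohomology directly from its definition, exploiting the fact that the structure maps of $\OO k \ell$ are literally subset inclusions inside~$R$.

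First I would write out the defining sequence~\eqref{eq:canonical} for $\cY = \OO k \ell$. Since $\upsilon^{-}$ and $\upsilon^{+}$ are inclusions, the map $\upsilon^{-} - \upsilon^{+} \colon \Rnn k \oplus \Rpn{-\ell} \rTo R$ sends $(a,b) \mapsto a - b$. Thus $\Hh 0 {\OO k \ell}$ is canonically identified with the intersection $\Rnn k \cap \Rpn{-\ell}$ inside~$R$ (via $(a,a) \mapsto a$), while $\Hh 1 {\OO k \ell}$ is identified with the quotient $R / (\Rnn k + \Rpn{-\ell})$.

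Next I would split into the two cases based on the sign of $k + \ell$. If $k + \ell \geq 0$, \emph{i.e.}\ $-\ell \leq k$, then the homogeneous components of degree~$n$ with $-\ell \leq n \leq k$ lie in both $\Rnn k$ and $\Rpn{-\ell}$, giving $\Rnn k \cap \Rpn{-\ell} = \bigoplus_{n=-\ell}^{k} R_{n}$; simultaneously every integer $n$ satisfies $n \leq k$ or $n \geq -\ell$, so $\Rnn k + \Rpn{-\ell} = R$ and the cokernel vanishes. If $k + \ell < 0$, \emph{i.e.}\ $k < -\ell$, the ranges are disjoint: the intersection is trivial, and the sum $\Rnn k + \Rpn{-\ell} = \bigoplus_{n \leq k} R_{n} \oplus \bigoplus_{n \geq -\ell} R_{n}$ is a genuine internal direct sum, so the quotient is $\bigoplus_{n = k+1}^{-\ell - 1} R_{n}$. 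Both statements follow by simply reading off the homogeneous decomposition of~$R$.

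Finally, the last assertion reduces to observing that each $R_{n}$ is a finitely generated projective $R_{0}$\nbd-module by Lemma~\ref{lem:projective_modules}, and that finite direct sums of finitely generated projective modules remain finitely generated projective; the sums in question are indeed finite as the index ranges are bounded. There is no real obstacle here---the proposition is essentially a direct inspection, and the only thing to be careful about is keeping the boundary cases ($k + \ell = 0$, $k + \ell = -1$) consistent with the stated index ranges.
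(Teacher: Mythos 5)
Your computation is exactly what the paper means by ``direct inspection'': identify $\hh 0 {\OO k \ell}$ with $\Rnn k \cap \Rpn{-\ell}$ and $\hh 1 {\OO k \ell}$ with $R/(\Rnn k + \Rpn{-\ell})$, then read off the homogeneous components. The argument is correct, including the boundary checks and the appeal to Lemma~\ref{lem:projective_modules} for the final assertion.
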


We now exhibit a phenomenon that does not occur in algebraic geometry:
given an integer $n$ and a sheaf~$\cY$ there are infinitely many
``$n$th twists'' of~$\cY$, \viz, the sheaves $\tw \cY k \ell$ with
$k + \ell = n$. This leads to a non-trivial action of~$\bZ$ on the
collection of cohomology modules of $n$th twists of~$\cY$:

\begin{theorem}
  \label{thm:cohomology_of_twist}
  Let $\cY$ be a sheaf. For any $k \in \bZ$ there is an isomorphism of
  $R_{0}$\nbd-modules, natural in~$\cY$,
  \begin{displaymath}
    \Hh q {\tw \cY k {-k}} \iso \hh q \cY \tensor_{R_{0}} R_{k} \ ,
    \quad q = 0,1 \ .
  \end{displaymath}
\end{theorem}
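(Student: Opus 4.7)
The plan is to identify the twisted sheaf $\tw \cY k {-k}$, considered merely as a diagram of $R_{0}$\nbd-modules, with the diagram obtained from~$\cY$ by tensoring each of its three constituents with $R_{k}$ over $R_{0}$; the conclusion will then follow from the flatness of $R_{k}$ as an $R_{0}$\nbd-module.

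The first step is to apply Proposition~\ref{prop:0-all} componentwise, producing natural isomorphisms of right $R_{0}$\nbd-modules
\begin{align*}
  Y^{-} \tensor_{R_{0}} R_{k} & \iso Y^{-} \tensor_{\Rn} \Rnn{k}, \\
  Y^{0} \tensor_{R_{0}} R_{k} & \iso Y^{0} \tensor_{\R} \R \iso Y^{0}, \\
  Y^{+} \tensor_{R_{0}} R_{k} & \iso Y^{+} \tensor_{\Rp} \Rpn{k},
\end{align*}
the last using $-\ell = k$ since $\ell = -k$. The second step, which is the heart of the argument, is to verify that under these identifications the structure maps of $\tw \cY k {-k}$ correspond precisely to $\upsilon^{\pm} \tensor \id_{R_{k}}$. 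This is a direct element chase: tracking a pure tensor $y \tensor r$ (with $y \in Y^{\pm}$ and $r \in R_{k}$) through the explicit formula $x \tensor y \mapsto x \tensor y$ for $\Omega$ and through the twisted structure map $y \tensor r \mapsto \upsilon^{\pm}(y) \cdot r$, both composites send $y \tensor r$ to $\upsilon^{\pm}(y) \tensor r$ in $Y^{0} \tensor_{R_{0}} R_{k}$.

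Once this identification is in place, the canonical exact sequence~\eqref{eq:canonical} for $\tw \cY k {-k}$ becomes, up to natural isomorphism, the sequence obtained by applying $- \tensor_{R_{0}} R_{k}$ to the corresponding sequence for~$\cY$. By Lemma~\ref{lem:projective_modules} the module $R_{k}$ is finitely generated projective over $R_{0}$, hence flat, so this functor preserves both kernels and cokernels. The desired isomorphisms $\Hh q {\tw \cY k {-k}} \iso \hh q \cY \tensor_{R_{0}} R_{k}$ for $q = 0, 1$ follow at once, and naturality in~$\cY$ is inherited from the naturality of~$\Omega$ in Proposition~\ref{prop:0-all}.

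The main obstacle is the compatibility check in the second step: although morally transparent, it requires careful juggling of the sheaf condition~\eqref{eq:sheaf_cond}, the inclusions $\Rnn k \subseteq \R \supseteq \Rpn k$, and the several flavours of tensor-product identification that appear simultaneously on both source and target. Everything after that is formal homological algebra.
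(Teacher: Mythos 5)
Your proposal is correct and follows essentially the same route as the paper's proof: both use Proposition~\ref{prop:0-all} to identify the componentwise tensor product with~$R_{k}$ over~$R_{0}$ with the twist over the larger rings, and both use projectivity (hence flatness) of~$R_{k}$ to preserve exactness of the canonical sequence~\eqref{eq:canonical}. The only difference is cosmetic --- you explicitly flag the need to check that the structure maps match up under the identifications, whereas the paper compresses this into the phrase ``we actually recover the defining exact sequence,'' but that is a refinement of presentation rather than a different argument.
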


\begin{proof}
  Since $R_{k}$ is a projective $R_{0}$\nbd-module
  (Lemma~\ref{lem:projective_modules}), the defining exact
  sequence~\eqref{eq:canonical} for~$\hh q \cY$ remains exact after
  application of $\nix \tensor_{R_{0}} R_{k}$, resulting in the exact
  sequence of $R_{0}$-modules
  \begin{multline*}
    0 \rTo \hh 0 \cY \tensor_{R_{0}} R_{k} \rTo \big( Y^{-}
    \tensor_{R_{0}} R_{k} \big) \,\oplus\, \big( Y^{+} \tensor_{R_{0}}
    R_{k} \big) \rTo \\%
    \rTo Y^{0} \tensor_{R_{0}} R_{k} \rTo \hh 1 \cY \tensor_{R_{0}}
    R_{k} \rTo 0 \ .
  \end{multline*}
  The middle terms are isomorphic, by Proposition~\ref{prop:0-all}, to
  the modules
  \begin{displaymath}
    \big( Y^{-} \tensor_{\Rn} \Rnn k \big) \oplus \big( Y^{+}
    \tensor_{\Rp} \Rpn k \big) \quad \text{and} \quad Y^{0}
    \tensor_{\R} \R \ ,
  \end{displaymath}
  respectively, so that we actually recover the defining exact
  sequence for the cohomology of~$\tw \cY k {-k}$. In other words, 
  $\Hh q {\tw \cY k {-k}} \iso \hh q \cY \tensor_{R_0} R_{k}$ as
  claimed.
\end{proof}

\begin{corollary}
  \label{cor:cohomology_zero}
  Let $q \in \{0,1\}$, and suppose $k,\ell, k', \ell' \in \bZ$ are
  such that $k+\ell = k'+\ell'$. For any sheaf $\cY$ we have an
  equivalence
  \begin{displaymath}
    \Hh q {\tw \cY k \ell} = 0 \quad \Longleftrightarrow \quad \Hh q
    {\tw \cY {k'} {\ell'}} = 0 \ .
  \end{displaymath}
\end{corollary}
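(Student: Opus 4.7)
The plan is to reduce everything to Theorem~\ref{thm:cohomology_of_twist} by using the fact that the two pairs of twisting parameters differ by a pair $(a,-a)$, so the passage from one twist to the other amounts to the kind of twist already handled by that theorem.

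More precisely, I would first set $a = k' - k$. The hypothesis $k + \ell = k' + \ell'$ forces $\ell' - \ell = -a$, and the composition law for twists recorded in~\eqref{eq:twisting} then gives a natural isomorphism
\begin{displaymath}
  \Tw{\tw \cY k \ell}{a}{-a} \iso \tw \cY {k'}{\ell'} \ .
\end{displaymath}
Applying Theorem~\ref{thm:cohomology_of_twist} to the sheaf $\tw \cY k \ell$ (in place of~$\cY$) together with this isomorphism yields
\begin{displaymath}
  \Hh q {\tw \cY {k'}{\ell'}} \iso \Hh q {\tw \cY k \ell} \tensor_{R_0} R_{a} \ .
\end{displaymath}

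It remains to check that tensoring with $R_{a}$ detects zero. This is where I would invoke Lemma~\ref{lem:bimodules}, which says that $R_{a}$ is invertible as an $R_{0}$\nbd-bi\-module with inverse~$R_{-a}$ via the multiplication map. In particular the functor $\nix \tensor_{R_{0}} R_{a}$ is an auto-equivalence of the category of right $R_{0}$\nbd-modules, so $M \tensor_{R_{0}} R_{a} = 0$ if and only if $M = 0$. Combining with the previous display gives the desired equivalence.

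There is no serious obstacle here: the corollary is essentially bookkeeping around Theorem~\ref{thm:cohomology_of_twist}, with the invertibility of $R_{a}$ providing the only non\nbd-formal input. If anything needs care it is simply observing that one is allowed to iterate twists without worrying about naturality, which is guaranteed by the naturality clause in Theorem~\ref{thm:cohomology_of_twist}.
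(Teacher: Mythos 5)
Your proposal is correct and uses essentially the same argument as the paper: both reduce the comparison of the two twists to a single $(a,-a)$ twist via the composition isomorphism~(\ref{eq:twisting}), invoke Theorem~\ref{thm:cohomology_of_twist}, and conclude from the invertibility of $R_a$ as an $R_0$-bimodule (Lemma~\ref{lem:bimodules}). The only cosmetic difference is that the paper normalises first (replacing $\cY$ by $\cY(k',\ell')$ so that $k'=\ell'=0$), whereas you apply the theorem directly to $\tw \cY k \ell$.
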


\begin{proof}
  In view of the isomorphisms~(\ref{eq:twisting}) we may assume,
  without loss of generality, that $k + \ell = 0 = k' = \ell'$ (in
  effect, we replace $\cY(k', \ell')$ with~$\cY$ in the
  statement). Then by the previous Theorem
  $\Hh q {\tw \cY k \ell} = 0$ if and only if
  $\hh q \cY \tensor_{R_0} R_{k} = 0$, which happens if and only if
  $\hh q \cY = 0$ as $R_{k}$ is an invertible $R_{0}$\nbd-bimodule by
  Lemma~\ref{lem:bimodules}.
\end{proof}

\section{Vector bundles with trivial first cohomology}

For $K$\nbd-theoretical considerations we will restrict attention to
those
vector bundles~$\cY$ having trivial first cohomology for all their
twists $\tw \cY k \ell$ with
$k+\ell \geq 0$: their global sections
are finitely generated projective $R_{0}$\nbd-modules. We will
demonstrate this in Theorem~\ref{thm:crucial_finiteness}, after
dispensing with some auxiliary results.

\begin{lemma}
  \label{lem:H1-zero-H0-projective}
  Let $\cY$ be a sheaf consisting of projective modules with vanishing
  $\hh 1 \cY$. Then $\hh 0 \cY$ is a projective $R_{0}$\nbd-module.
\end{lemma}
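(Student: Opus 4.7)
The plan is to derive the conclusion directly from the defining exact sequence~\eqref{eq:canonical}. The vanishing of $\hh 1 \cY$ collapses that sequence to a short exact sequence of $R_{0}$\nbd-modules
\[
  0 \rTo \hh 0 \cY \rTo Y^{-} \oplus Y^{+} \rTo Y^{0} \rTo 0 \ .
\]
It therefore suffices to show that both $Y^{0}$ and $Y^{-} \oplus Y^{+}$ are projective $R_{0}$\nbd-modules: projectivity of~$Y^{0}$ makes the sequence split over~$R_{0}$, and projectivity of $Y^{-} \oplus Y^{+}$ then forces its direct summand $\hh 0 \cY$ to be projective.

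The crux of the argument is the transfer of projectivity from the three ground rings $\R$, $\Rn$ and~$\Rp$ down to~$R_{0}$. By Lemma~\ref{lem:projective_modules}, every homogeneous component $R_{n}$ is a finitely generated projective $R_{0}$\nbd-module, so the decompositions $\R = \bigoplus_{n\in\bZ} R_{n}$, $\Rn = \bigoplus_{n\leq 0} R_{n}$ and $\Rp = \bigoplus_{n\geq 0} R_{n}$ exhibit these rings themselves as projective $R_{0}$\nbd-modules, via the standard fact that arbitrary direct sums of projectives are projective. Consequently every free module over any of these three rings is projective over~$R_{0}$, and hence so is every direct summand of such a free module, \ie, every projective module over $\R$, $\Rn$ or~$\Rp$. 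Applied to the three projective modules $Y^{-}$, $Y^{0}$ and~$Y^{+}$ out of which $\cY$ is built, this yields that all of $Y^{-}$, $Y^{0}$, $Y^{+}$ and $Y^{-} \oplus Y^{+}$ are projective as $R_{0}$\nbd-modules.

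With this in hand, the short exact sequence displayed above splits over~$R_{0}$, and $\hh 0 \cY$ appears as a direct summand of the projective $R_{0}$\nbd-module~$Y^{-} \oplus Y^{+}$; hence $\hh 0 \cY$ is itself projective. There is no genuine obstacle to the argument---everything is powered by the elementary observation that strong grading forces each homogeneous component~$R_{n}$ to be projective over~$R_{0}$, which we already have from Lemma~\ref{lem:projective_modules}.
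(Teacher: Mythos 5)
Your argument is correct and matches the paper's proof essentially step for step: reduce~\eqref{eq:canonical} to a short exact sequence, use Lemma~\ref{lem:projective_modules} to see that $\Rn$, $\R$, $\Rp$ are projective over~$R_{0}$ and hence that projective modules over these rings are projective over~$R_{0}$, deduce that the sequence splits, and conclude that $\hh 0 \cY$ is a summand of the projective module $Y^{-}\oplus Y^{+}$. You simply unpack the transfer-of-projectivity step a bit more explicitly than the paper does.
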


\begin{proof}
  In view of the hypothesis $\hh 1 \cY = 0$, the exact
  sequence~\eqref{eq:canonical} reduces to a short exact sequence of
  $R_{0}$\nbd-modules
  \begin{displaymath}
    0 \rTo \hh 0 \cY \rTo Y^{-} \oplus Y^{+} \rTo Y^{0} \rTo 0 \ .
  \end{displaymath}
  As $\R$ is strongly $\bZ$\nbd-graded, the rings $\Rn$, $\R$
  and~$\Rp$ are projective $R_{0}$\nbd-mod\-ules
  (Lemma~\ref{lem:projective_modules}). We conclude that projective
  modules over any of these rings are projective as
  $R_{0}$\nbd-modules; this applies in particular to $Y^{-}$, $Y^{0}$
  and~$Y^{+}$. Thus the above short exact sequence splits, and
  $\hh 0 \cY$ is a direct summand of the projective
  $\R_{0}$\nbd-module $Y^{-} \oplus Y^{+}$. Hence $\hh 0 \cY$ is a
  projective $\R_{0}$\nbd-module as well.
\end{proof}

\begin{lemma}
  \label{lem:H1_triv_high_twist}
  Let $\cY$ be a locally finitely generated sheaf. Then there exists
  $n_{0} \geq 0$ such that $\hh 1 {\tw \cY k \ell} = 0$ for all $k,
  \ell \in \bZ$ with $k+\ell \geq n_{0}$.
\end{lemma}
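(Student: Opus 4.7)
The plan is to reduce the two-parameter statement to a one-parameter statement using Corollary~\ref{cor:cohomology_zero}, then resolve $\cY$ by twisting sheaves using Lemma~\ref{lem:cF_onto}, and finally apply the long exact cohomology sequence together with the explicit cohomology calculation for twisting sheaves from Proposition~\ref{prop:H_of_On}.

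First, I would invoke Corollary~\ref{cor:cohomology_zero}: the vanishing of $\hh 1 {\tw \cY k \ell}$ depends only on the sum $k+\ell$. Hence it suffices to exhibit $n_{0} \geq 0$ such that $\hh 1 {\tw \cY n 0} = 0$ for every $n \geq n_{0}$; the general case then follows by taking $(k', \ell') = (k+\ell, 0)$ in the corollary.

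Next, I apply Lemma~\ref{lem:cF_onto} to the locally finitely generated sheaf~$\cY$: there exists an epimorphism $\cF \rTo \cY$, where $\cF = \bigoplus_{i=1}^{m} \OO {k_{i}} {\ell_{i}}$ is a finite direct sum of twisting sheaves with $k_{i}, \ell_{i} \leq 0$ and $k_{i} + \ell_{i} < 0$. Let $\cK$ denote the kernel and consider the short exact sequence $0 \rTo \cK \rTo \cF \rTo \cY \rTo 0$. Since twisting by $(n, 0)$ is exact, we obtain a short exact sequence $0 \rTo \tw \cK n 0 \rTo \tw \cF n 0 \rTo \tw \cY n 0 \rTo 0$, and the six-term sequence~\eqref{eq:not-so-long} gives a surjection $\hh 1 {\tw \cF n 0} \rTo \hh 1 {\tw \cY n 0} \rTo 0$. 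It therefore suffices to force $\hh 1 {\tw \cF n 0}$ to vanish for $n$ large.

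Using the isomorphisms~\eqref{eq:twisting} and the fact that cohomology commutes with finite direct sums (visible directly from the defining sequence~\eqref{eq:canonical}), we have
\begin{displaymath}
  \hh 1 {\tw \cF n 0} \iso \bigoplus_{i=1}^{m} \Hh 1 {\OO {k_{i}+n} {\ell_{i}}} \ .
\end{displaymath}
By Proposition~\ref{prop:H_of_On}\,(1), each summand vanishes as soon as $(k_{i}+n) + \ell_{i} \geq 0$, i.e., $n \geq -(k_{i}+\ell_{i})$. Setting $n_{0} = \max_{i} \{ -(k_{i}+\ell_{i}) \} \geq 1$ gives the required bound, and combining with the reduction in the first step completes the proof.

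The proof is essentially a routine assembly of the preceding machinery; the only real choice is the initial reduction to a one-parameter problem via Corollary~\ref{cor:cohomology_zero}, which sidesteps having to track two indices simultaneously through the resolution argument. The fact that Lemma~\ref{lem:cF_onto} produces twisting sheaves with \emph{both} indices non-positive (rather than just $k_{i}+\ell_{i}<0$) is not actually needed here, so no further delicate bookkeeping is required.
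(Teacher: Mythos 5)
Your proof is correct and follows essentially the same approach as the paper: resolve $\cY$ by twisting sheaves via Lemma~\ref{lem:cF_onto}, twist, observe that the twisted resolving sheaf has vanishing $H^1$ by Proposition~\ref{prop:H_of_On}, and read off vanishing for $\cY$ from the long exact sequence. The only cosmetic difference is your initial reduction via Corollary~\ref{cor:cohomology_zero} to the single-index family $\tw \cY n 0$; the paper simply carries both indices $k,\ell$ through the same computation, which is no harder, so the reduction is harmless but not actually a simplification.
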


\begin{proof}
  By Lemma~\ref{lem:cF_onto} there exists an epimorphism
  $\cF \rTo \cY$, where $\cF$ is a finite sum of twisting sheaves
  $\OO {k_{j}} {\ell_{j}}$ with $k_{j}, \ell_{j} \leq 0$ and
  $k_{j}+\ell_{j} < 0$. Let $n_{0}$ be the maximum of the positive
  numbers $-k_{j}-\ell_{j}$ occurring here. We claim that this number
  has the required property.

  To see this, let $k, \ell \in \bZ$ with $k+\ell \geq n_{0}$ be
  given. As twisting is exact we then have an epimorphism
  $\epsilon \colon \tw \cF k \ell \rTo \tw \cY k
  \ell$. By~\eqref{eq:twisting} we have
  \begin{displaymath}
    \tw \cF k \ell = \bigoplus_{j} \tw {\OO {k_{j}} {\ell_{j}}} k \ell
    \iso \bigoplus_{j} \OO {k_{j}+k} {\ell_{j}+\ell}
  \end{displaymath}
  with $k_{j}+k+\ell_{j}+\ell \geq 0$, by choice of~$n_{0}$, so that
  $\tw \cF k \ell$ has trivial first cohomology by
  Proposition~\ref{prop:H_of_On}. By~\eqref{eq:not-so-long} we have an
  exact sequence
  \begin{displaymath}
    \ldots \rTo \underbrace{\Hh 1 {\tw \cF k \ell}}_{= 0} \rTo \Hh 1
    {\tw \cY k \ell} \rTo 0
  \end{displaymath}
  proving that $\Hh 1 {\tw \cY k \ell} = 0$ as advertised.
\end{proof}

\begin{lemma}
  \label{lem:H0_fg_proj}
  For every vector bundle $\cY$ there exists a number $n_{0} \geq 0$
  such that $\Hh 0 {\tw \cY k \ell}$ is a finitely generated
  projective $R_{0}$\nbd-module for all $k, \ell \in \bZ$ with
  $k+\ell \geq n_{0}$.
\end{lemma}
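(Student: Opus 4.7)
The plan is to reduce the problem to the explicit cohomology calculation for twisting sheaves by exhibiting $\cY$ as the quotient of a finite direct sum of such sheaves. First I would apply Lemma~\ref{lem:cF_onto} to obtain an epimorphism $\cF \rTo \cY$, with $\cF = \bigoplus_{j} \OO {k_{j}} {\ell_{j}}$ a finite direct sum of twisting sheaves satisfying $k_{j}, \ell_{j} \leq 0$ and $k_{j}+\ell_{j} < 0$. Let $\cK$ denote the kernel. Component-wise, each short exact sequence $0 \rTo K^{?} \rTo F^{?} \rTo Y^{?} \rTo 0$ splits because $Y^{?}$ is projective over the relevant ring; hence $K^{?}$ is a direct summand of the finitely generated projective module $F^{?}$, making $\cK$ itself a vector bundle.

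Next I would fix $n_{0}$ large enough that $n_{0} \geq -k_{j}-\ell_{j}$ for every~$j$ and that both $\Hh 1 {\tw \cY k \ell} = 0$ and $\Hh 1 {\tw \cK k \ell} = 0$ for all $k,\ell \in \bZ$ with $k+\ell \geq n_{0}$, the latter two obtained from Lemma~\ref{lem:H1_triv_high_twist} applied in turn to $\cY$ and to $\cK$. Combining the first condition with Proposition~\ref{prop:H_of_On} and the isomorphisms~\eqref{eq:twisting}, the first cohomology of $\tw \cF k \ell$ also vanishes in that range. Twisting the sequence $0 \rTo \cK \rTo \cF \rTo \cY \rTo 0$ (which remains exact since twisting is exact) and invoking the cohomology sequence~\eqref{eq:not-so-long} then yields a short exact sequence of $R_{0}$\nbd-modules
\begin{displaymath}
  0 \rTo \Hh 0 {\tw \cK k \ell} \rTo \Hh 0 {\tw \cF k \ell} \rTo \Hh
  0 {\tw \cY k \ell} \rTo 0 \ .
\end{displaymath}
By Proposition~\ref{prop:H_of_On} and Lemma~\ref{lem:projective_modules}, the middle term is a finite direct sum of homogeneous components of~$\R$, each finitely generated projective over~$R_{0}$, and so $\Hh 0 {\tw \cY k \ell}$ is finitely generated over~$R_{0}$ as a quotient of a finitely generated module.

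To upgrade finite generation to the desired finitely generated projective conclusion, I would note that $\tw \cY k \ell$ is itself a vector bundle, since tensor products of finitely generated projective modules over the relevant ring stay finitely generated projective, and that its first cohomology vanishes by choice of~$n_{0}$; therefore Lemma~\ref{lem:H1-zero-H0-projective} delivers projectivity of its global sections over~$R_{0}$. The only step demanding genuine attention is the verification that $\cK$ remains a vector bundle; without this one could not feed $\cK$ to Lemma~\ref{lem:H1_triv_high_twist} and the cohomology sequence would fail to collapse to three terms. Everything else is a direct assembly of results already established.
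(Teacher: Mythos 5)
Your proof takes essentially the same route as the paper's: surject from a finite sum of twisting sheaves via Lemma~\ref{lem:cF_onto}, note the kernel $\cK$ is a vector bundle, pick $n_0$ via Lemma~\ref{lem:H1_triv_high_twist} to kill first cohomology, and combine surjectivity from the cohomology sequence with Lemma~\ref{lem:H1-zero-H0-projective}. The only cosmetic differences are that you spell out why $\cK$ is a vector bundle (the paper asserts it without proof) and you impose the extra condition $n_0 \geq -k_j-\ell_j$ to kill $\Hh 1 {\tw \cF k \ell}$ and get a genuine short exact sequence in cohomology, whereas the paper needs only the vanishing of $\Hh 1 {\tw \cK k \ell}$ to read off surjectivity of $\Hh 0 {\tw \cF k \ell} \to \Hh 0 {\tw \cY k \ell}$ directly.
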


\begin{proof}
  By Lemma~\ref{lem:cF_onto} there exists a vector bundle
  $\cF = \bigoplus_{j} \OO {k_{j}} {\ell_{j}}$ and an epimorphism
  $\epsilon \colon \cF \rTo \cY$. Then $\cK = \ker \epsilon$ is a
  vector bundle as well, and we choose the integer~$n_{0}$, according
  to Lemma~\ref{lem:H1_triv_high_twist}, so that both $\cK$ and~$\cY$
  have trivial first cohomology when twisted accordingly.  Now suppose
  that $k+\ell \geq n_{0}$. By Lemma~\ref{lem:H1-zero-H0-projective}
  we know that $\Hh 0 {\tw \cY k \ell}$ is a projective
  $R_{0}$\nbd-module. From the short exact sequence
  \begin{displaymath}
    0 \rTo \tw \cK k \ell \rTo \tw \cF k \ell \rTo \tw \cY k \ell \rTo
    0
  \end{displaymath}
  and~\eqref{eq:not-so-long} we obtain an exact sequence of
  $R_{0}$\nbd-modules
  \begin{displaymath}
    \Hh 0 {\tw \cF k \ell} \rTo^{\alpha} \Hh 0 {\tw \cY k \ell} \rTo
    \Hh 1 {\tw \cK k \ell} = 0
  \end{displaymath}
  so that $\alpha$~is onto. But the domain of~$\alpha$ is a finitely
  generated projective $R_{0}$\nbd-module, by
  Proposition~\ref{prop:H_of_On}, so $\Hh 0 {\tw \cY k \ell}$ is
  finitely generated.
\end{proof}

\begin{definition}
  \label{def:vb_n}
  Let $n \in \bZ$. We denote by $\vb_{n}$ the full subcategory
  of~$\vb$ consisting of those vector bundles~$\cY$ with
  $\hh 1 {\tw \cY k \ell} = 0$ for all $k, \ell \in \bZ$ with
  $k + \ell \geq n$.
\end{definition}

We will see later that the algebraic $K$\nbd-theory of the
category~$\vb$ is the same as the algebraic $K$\nbd-theory
of~$\vb_{0}$. The point is that for~$\vb_{0}$ we have the following
crucial finiteness result at our disposal:

\begin{theorem}
  \label{thm:crucial_finiteness}
  For $\cY \in \vb_{0}$ and $k, \ell \in \bZ$ with $k+\ell \geq 0$,
  the $R_{0}$\nbd-module $\Hh 0 {\tw \cY k \ell}$ is finitely
  generated projective.
\end{theorem}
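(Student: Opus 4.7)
The plan is to proceed by downward induction on the integer $n = k+\ell$, starting from a suitably large value. The base case comes directly from Lemma~\ref{lem:H0_fg_proj}: it supplies an $n_{0} \geq 0$ such that $\Hh 0 {\tw \cY k \ell}$ is finitely generated projective whenever $k+\ell \geq n_{0}$. The task is then to propagate this property down to every $n$ with $0 \leq n < n_{0}$, using essentially that $\cY \in \vb_{0}$ forces $\Hh 1 {\tw \cY k \ell} = 0$ for all $k, \ell$ with $k+\ell \geq 0$.

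The engine of the induction is a \textsc{Mayer}--\textsc{Vietoris}-type (cartesian) short exact sequence of sheaves
\begin{equation*}
0 \rTo \tw \cY k \ell \rTo \tw \cY {k+1} \ell \oplus \tw \cY k {\ell+1} \rTo \tw \cY {k+1} {\ell+1} \rTo 0,
\end{equation*}
valid for arbitrary $\cY$ and all $k, \ell \in \bZ$, with arrows induced by the inclusions $\Rnn k \subseteq \Rnn{k+1}$ and $\Rpn{-\ell} \subseteq \Rpn{-\ell-1}$. I would verify this componentwise: on the ``$-$''\nbd-component it is $Y^{-}$ tensored over $\Rn$ with the \emph{split} short exact sequence $0 \rTo \Rnn k \rTo \Rnn{k+1} \oplus \Rnn k \rTo \Rnn{k+1} \rTo 0$ already appearing (in slightly different guise) in the proof of Proposition~\ref{prop:cartesian_square}; the ``$+$''\nbd-component is analogous, and the middle component is trivially exact because twisting leaves $Y^{0} \tensor_{\R} \R \iso Y^{0}$ essentially unchanged. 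The key point is that the ground-level sequences are split, so no flatness hypothesis is needed for the tensor products to remain exact.

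Feeding this sequence of sheaves into the cohomology sequence~\eqref{eq:not-so-long} and using $\Hh 1 {\tw \cY k \ell} = 0$ produces, for any $(k, \ell)$ with $k + \ell = n \geq 0$, a short exact sequence of $R_{0}$\nbd-modules
\begin{equation*}
0 \rTo \Hh 0 {\tw \cY k \ell} \rTo \Hh 0 {\tw \cY {k+1} \ell} \oplus \Hh 0 {\tw \cY k {\ell+1}} \rTo \Hh 0 {\tw \cY {k+1} {\ell+1}} \rTo 0.
\end{equation*}
By the strong induction hypothesis the three cohomology modules on the right, whose indexing sums are $n+1$, $n+1$ and $n+2$, are all finitely generated projective $R_{0}$\nbd-modules; since the rightmost of them is projective the sequence splits, and $\Hh 0 {\tw \cY k \ell}$ appears as a direct summand of a finitely generated projective $R_{0}$\nbd-module, hence is itself finitely generated projective.

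The only real point requiring care is the justification of the sheaf-level short exact sequence above in this graded, possibly non-commutative setting; once split exactness of the three ground-ring sequences is noted, the rest of the argument is bookkeeping with the cohomology machinery assembled in Section~\ref{sec:cohomology}.
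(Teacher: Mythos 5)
Your proposal is correct and takes essentially the same route as the paper: you use the same cartesian sequence, the same downward induction with base case supplied by Lemma~\ref{lem:H0_fg_proj}, and the same splitting argument to conclude finiteness of the zeroth term. The only cosmetic difference is that you index the induction directly by $n = k+\ell$, whereas the paper fixes $k,\ell$ with $k+\ell\ge 0$ and inducts downward on an auxiliary shift $j$, starting at $j = n_{0}-1$; both are the same argument.
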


\begin{proof}
  Given integers $k,\ell \in \bZ$ with $k + \ell \geq 0$, and given
  any $j \geq 0$, we obtain from~\eqref{eq:cartesian}, by tensoring
  with~$\cY$, a short exact sequence of vector bundles
  \begin{multline*}
    0 \rTo \tw \cY k {\ell+j} \rTo \tw \cY {k+1} {\ell+j} \oplus \tw
    \cY k {\ell+j+1} \\ \rTo \tw \cY {k+1} {\ell+j+1} \rTo 0
  \end{multline*}
  which by~\eqref{eq:not-so-long} yields, 
  keeping in mind %
  that $\cY$ 
  is %
  an object in~$\vb_{0}$ 
  by hypothesis, %
  a short exact sequence
  \begin{multline*}
    0 \rTo \Hh 0 {\tw \cY k {\ell+j}} \\%
    \rTo \Hh 0 {\tw \cY {k+1} {\ell+j}} \oplus \Hh 0 {\tw \cY k
      {\ell+1+j}} \\%
    \rTo \Hh 0 {\tw \cY {k+1} {\ell+1+j}} \rTo 0 \ .
  \end{multline*}
  We choose $n_{0} \gg 0$ sufficiently large according to
  Lemma~\ref{lem:H0_fg_proj}, applied to the vector bundle~$\cY$.  We
  can now use downward induction on~$j$, starting with $j=n_{0}-1$. In
  each step, the second and third term are known to be finitely
  generated projective $R_{0}$\nbd-modules, whence the first term is
  so as well, for {\it all\/} integers $k, \ell$ with $k + \ell \geq
  0$. --- The last step of the induction is done with $j=0$, proving
  the claim.
\end{proof}

\goodbreak

In preparation for our $K$\nbd-theoretical deliberations we consider
cokernels and pushouts of certain maps of vector bundles.

\begin{lemma}
  \label{lem:coker_is_vb_n}
  Suppose that $\alpha \colon \cX \rTo \cY$ is an injective map of
  sheaves. Suppose further that $k, \ell \in \bZ$ are such that
  $\Hh 1 {\tw \cY k \ell} = 0$. Then the cohomology module
  $\Hh 1 {\Tw {\coker(\alpha)} k \ell}$ is trivial as well. --- In
  particular, if $\coker(\alpha)$ is a vector bundle, and if $\cY$ is
  an object of~$\vb_{n}$, then $\coker(\alpha)$~is an object
  of~$\vb_{n}$.
\end{lemma}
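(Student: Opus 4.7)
The plan is to reduce the statement to a direct application of the long exact cohomology sequence~\eqref{eq:not-so-long} after twisting. Since $\alpha \colon \cX \rTo \cY$ is injective with cokernel $\cC = \coker(\alpha)$, we have a short exact sequence of sheaves
\begin{displaymath}
  0 \rTo \cX \rTo \cY \rTo \cC \rTo 0 \ .
\end{displaymath}
Because twisting by $(k, \ell)$ is exact (this is noted in the paragraph after the definition of twist, and relies on flatness of the relevant bimodules via Lemma~\ref{lem:projective_modules}), we obtain a short exact sequence
\begin{displaymath}
  0 \rTo \tw \cX k \ell \rTo \tw \cY k \ell \rTo \tw \cC k \ell \rTo 0 \ .
\end{displaymath}

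Next, I would feed this into the six-term cohomology sequence~\eqref{eq:not-so-long}. The relevant segment reads
\begin{displaymath}
  \ldots \rTo \Hh 1 {\tw \cY k \ell} \rTo \Hh 1 {\tw \cC k \ell} \rTo 0 \ ,
\end{displaymath}
since $\hh 2 = 0$ by definition of the \v Cech cohomology. The hypothesis $\Hh 1 {\tw \cY k \ell} = 0$ then forces $\Hh 1 {\tw \cC k \ell} = 0$, establishing the main claim.

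For the addendum, assume in addition that $\cC$ is a vector bundle and $\cY \in \vb_{n}$. By Definition~\ref{def:vb_n}, $\Hh 1 {\tw \cY k \ell}$ vanishes for every pair $(k,\ell)$ with $k + \ell \geq n$. Applying the first part of the lemma to each such pair gives $\Hh 1 {\tw \cC k \ell} = 0$ for all such $(k,\ell)$, so $\cC$ satisfies the defining condition of~$\vb_{n}$.

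There is no real obstacle here: the argument is a three-line diagram chase, and the only thing to double-check is that exactness of twisting and the not-so-long exact sequence~\eqref{eq:not-so-long} are already in place, which they are. The lemma should be viewed as a packaging result whose purpose is to let us build new objects of~$\vb_{n}$ by taking cokernels of monomorphisms into objects of~$\vb_{n}$, to be used in the subsequent $K$\nbd-theoretic Waldhausen-category considerations.
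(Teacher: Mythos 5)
Your argument coincides with the paper's proof: both twist the short exact sequence $0 \to \cX \to \cY \to \coker(\alpha) \to 0$ using exactness of the twisting functor and then read off the surjection $\Hh 1 {\tw \cY k \ell} \twoheadrightarrow \Hh 1 {\Tw{\coker(\alpha)} k \ell}$ from the tail of the cohomology sequence~\eqref{eq:not-so-long}. The handling of the addendum is likewise identical, so this is a correct proof taking the same route as the paper.
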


\begin{proof}
  As twisting is an exact endofunctor of the category~$\qco$, there is
  a canonical isomorphism
  $\Tw {\coker(\alpha)} k \ell \iso \coker\big( \tw \alpha k \ell
  \big)$ and a resulting short exact sequence of sheaves
  \begin{displaymath}
    0 \rTo \tw \cX k \ell \rTo[l>=4em]^{\tw \alpha k \ell} \tw \cY k \ell
    \rTo \Tw {\coker(\alpha)} k \ell \rTo 0 \ .
  \end{displaymath}
  The associated exact sequence in cohomology~\eqref{eq:not-so-long}
  ends in
  \begin{displaymath}
    \Hh 1 {\tw \cY k \ell} \rTo \Hh1 {\Tw {\coker(\alpha)} k \ell} 
    \rTo 0
  \end{displaymath}
  with first term being trivial, by hypothesis on~$\cY$. This shows
  that the second term is trivial as well.
\end{proof}

Consider now a commutative square diagram in the category~$\qco$:
\begin{diagram}[LaTeXeqno]
  \label{diag:square}
  \cX & \rTo^{\alpha} & \cY \\
  \dTo && \dTo \\
  \cZ & \rTo^{\beta} & \cW
\end{diagram}

\begin{lemma}
  \label{lem:pushouts_exist}
  Suppose that the diagram~\eqref{diag:square} is a pushout with
  $\alpha$ an injective map, and that the sheaves $\cZ$
  and~$\coker(\alpha)$ are vector bundles. Then the map~$\beta$ is
  injective, and both~$\cW$ and $\coker(\beta)$ are vector
  bundles. If, in addition, $\cY$ and~$\cZ$ are objects of~$\vb_{n}$
  then so is~$\cW$.
\end{lemma}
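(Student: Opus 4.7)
The plan is to exploit the fact that $\qco$ is an \textsc{abel}ian category, so the standard pushout yoga is at our disposal: for any pushout square with $\alpha$ injective, the parallel map $\beta$ is injective and the induced map on cokernels $\coker(\alpha)\to\coker(\beta)$ is an isomorphism. (This holds in any \textsc{abel}ian category; it is easily verified component-wise in~$\qco$, since kernels and cokernels are computed pointwise.) Thus $\beta$ is injective and $\coker(\beta)\iso \coker(\alpha)$ is a vector bundle by hypothesis.

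Next I would show that $\cW$ is a vector bundle. For this I use the resulting short exact sequence
\begin{displaymath}
  0 \rTo \cZ \rTo^{\beta} \cW \rTo \coker(\beta) \rTo 0
\end{displaymath}
and pass to decoration components $? \in \{-, 0, +\}$ to obtain short exact sequences of modules over the respective rings $\Rn$, $\R$, $\Rp$. Since $\cZ^{?}$ is finitely generated projective and $\coker(\beta)^{?} \iso \coker(\alpha)^{?}$ is as well, each such sequence splits and yields $\cW^{?} \iso \cZ^{?} \oplus \coker(\beta)^{?}$, which is finitely generated projective over the appropriate ring. Hence $\cW$ is a vector bundle, and so is $\coker(\beta)$ (already established).

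Finally, to establish the $\vb_{n}$\nbd-statement, let $k,\ell\in\bZ$ with $k+\ell\geq n$. Applying $\tw{\nix}{k}{\ell}$ (which is exact) to the short exact sequence above and taking the resulting cohomology sequence~\eqref{eq:not-so-long}, I obtain
\begin{displaymath}
  \Hh 1 {\tw \cZ k \ell} \rTo \Hh 1 {\tw \cW k \ell} \rTo
  \Hh 1 {\Tw {\coker(\beta)} k \ell} \rTo 0 \ .
\end{displaymath}
The left-hand term vanishes since $\cZ\in\vb_{n}$. For the right-hand term I invoke Lemma~\ref{lem:coker_is_vb_n} applied to the injective map $\alpha\colon\cX\rTo\cY$: the hypothesis $\cY\in\vb_{n}$ gives $\Hh 1 {\tw \cY k \ell}=0$, and then the lemma yields $\Hh 1 {\Tw{\coker(\alpha)}{k}{\ell}}=0$, which equals $\Hh 1{\Tw{\coker(\beta)}{k}{\ell}}$ via the isomorphism $\coker(\alpha)\iso\coker(\beta)$. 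Hence the middle term vanishes, proving $\cW \in \vb_{n}$. I expect no serious obstacle here; the only point that needs care is the identification $\coker(\alpha)\iso\coker(\beta)$, which is the standard pushout-of-monics fact and can be checked pointwise in~$\qco$.
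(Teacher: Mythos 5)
Your proof is correct and follows essentially the same approach as the paper. The one small difference lies in how $\beta$'s injectivity and the splitting of the resulting short exact sequence are obtained: you invoke the general abelian-category fact that a pushout of a monomorphism is a monomorphism with isomorphic cokernel and then split directly because that cokernel is pointwise projective, whereas the paper first notes that each $\alpha^{?}$ is a \emph{split} injection (its cokernel being projective) and that split injections are preserved by pushouts; both routes give the split short exact sequence $0 \to Z^{?} \to W^{?} \to \coker(\beta^{?}) \to 0$ and the rest of the argument, including the $\vb_{n}$ part via Lemma~\ref{lem:coker_is_vb_n} and the cohomology sequence, is identical.
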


\begin{proof}
  As the square is a pushout, $\coker(\alpha)$ is isomorphic
  to~$\coker(\beta)$ so that the latter is a vector bundle. For each
  decoration $? \in \{-,0,+\}$ the short exact sequence
  \begin{displaymath}
    0 \rTo X^{?} \rTo^{\alpha^{?}} Y^{?} \rTo \coker(\alpha^{?}) \rTo 0
  \end{displaymath}
  splits as the third non-trivial term is a projective module; it
  follows that $\alpha^{?}$ is a split injection, whence its pushout
  $\beta^{?}$ is a split injection as well. Hence we obtain a split
  short exact sequence
  \begin{displaymath}
    0 \rTo Z^{?} \rTo^{\beta^{?}} W^{?} \rTo \coker(\beta^{?}) \rTo 0
  \end{displaymath}
  with first and third term finitely generated projective (as both
  $\cZ$ and~$\coker(\beta)$ are vector bundles). This forces $W^{?}$
  to be finitely generated projective whence $\cW$ is a vector bundle.

  Now suppose, in addition to the previous hypotheses, that $\cY$
  and~$\cZ$ are objects of $\vb_{n}$. From the exact sequence of
  sheaves
  \begin{displaymath}
    0 \rTo \tw \cZ k \ell \rTo[l>=4em]^{\tw \beta k \ell} \tw \cW k \ell
    \rTo \Tw {\coker(\beta)} k \ell \rTo 0
  \end{displaymath}
  (note that twisting is exact, and commutes with pushouts as tensor
  products preserve colimits) we construct an exact sequence snippet
  in cohomology
  \begin{displaymath}
    \Hh 1 {\tw \cZ k \ell} \rTo \Hh 1 {\tw \cW k \ell} \rTo 
    \Hh1 {\Tw {\coker(\beta)} k \ell} \ ,
  \end{displaymath}
  cf.~\eqref{eq:not-so-long}. Since $\cZ \in \vb_{n}$ by hypothesis,
  and since $\coker(\beta) \iso \coker (\alpha) \in \vb_{n}$ by
  Lemma~\ref{lem:coker_is_vb_n}, we conclude that the middle term is
  trivial whenever $k + \ell \geq n$ so that $\cW \in \vb_{n}$ as
  claimed.
\end{proof}

\part{The algebraic $K$-theory of the projective line}
\label{part:K(P1)}

We continue to assume that $\R = \bigoplus_{k \in \bZ} R_{k}$ is a
strongly $\bZ$\nbd-graded ring.

\section{Algebraic $K$-theory}

A map~$f$ of chain complexes of quasi-coherent sheaves will be called
a {\it $q$-equiv\-a\-lence}, or a {\it quasi-isomorphism\/}, if $f^{?}$
is a quasi-isomorphism of chain complexes of modules for each
decoration $? \in \{-,0,+\}$. We let $\ch \vb$ denote the category of
bounded chain complexes of vector bundles; similarly, we denote by
$\ch \vb_{n}$ the category of bounded chain complexes of vector
bundles in~$\vb_{n}$, cf.~Definition~\ref{def:vb_n}. The categories
$\ch \vb$ and $\ch \vb_{n}$ are \textsc{Waldhausen} categories with
weak equivalences the quasi-isomorphisms, and cofibrations the
injections with cokernel a complex of vector bundles. Existence of the
requisite pushouts has been verified above in
Lemma~\ref{lem:pushouts_exist}; note that in the case of~$\vb_{n}$ the
cokernel of a cofibration is an object of~$\ch\vb_{n}$ again by
Lemma~\ref{lem:coker_is_vb_n}. --- As weak equivalences are defined
homologically, they satisfy the saturation and extension axioms. All
categories mentioned have a cylinder functor given by the usual
mapping cylinder construction which satisfies the cylinder axiom.

\begin{definition}
  \label{def:K_P1}
  The $K$-theory space of the projective line is defined to be
  \begin{displaymath}
    K(\pp) = \Omega |q\mathcal{S}_{\bullet} \ch\vb | \ ,
  \end{displaymath}
  where ``$q$'' stands for the category of quasi-isomorphisms.
\end{definition}

By Lemma~\ref{lem:H1_triv_high_twist}, the category $\ch\vb$ is the
increasing union of the categories~$\ch\vb_{n}$, $n \geq 0$. This
filtration allows us to restrict attention to the category $\vb_{0}$
as far as algebraic $K$\nbd-theory is concerned. Before we formally
prove this result, let us mention that twisting $\cY \mapsto \tw \cY k
\ell$ is an additively exact auto-equivalence that preserves
quasi-isomorphisms, cofibrations and pushouts, with the functor $\cZ
\mapsto \tw \cZ {-k} {-\ell})$ being an inverse. We also introduce new
notation for the twisting functor:
\begin{equation}
  \label{eq:theta}
  \theta_{k,\ell} \colon \vb \rTo \vb \ , \quad \cY \mapsto \tw \cY
  k \ell \ .
\end{equation}

\begin{lemma}
  For all $n \in \bN$ the inclusion $\vb_{n} \subseteq \vb_{n+1}$
  induces a homotopy equivalence $K\big(\vb_{n}\big) \simeq
  K\big(\vb_{n+1}\big)$.
\end{lemma}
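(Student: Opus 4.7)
The plan is to construct an explicit homotopy inverse to the inclusion functor $F \colon \ch\vb_{n} \hookrightarrow \ch\vb_{n+1}$ by combining the cartesian short exact sequence of twisting sheaves from Proposition~\ref{prop:cartesian_square} with \textsc{Waldhausen}'s additivity theorem.

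Tensoring the sequence~\eqref{eq:cartesian} (taken with $k=\ell=0$) by an arbitrary vector bundle~$\cY$, in exactly the manner used in the proof of Theorem~\ref{thm:crucial_finiteness}, yields a short exact sequence of vector bundles
\[
  0 \rTo \cY \rTo \tw \cY 1 0 \oplus \tw \cY 0 1 \rTo \tw \cY 1 1 \rTo 0
\]
which is natural in~$\cY$. A direct inspection of the definitions shows that the twisting functor $\theta_{a,b}$ shifts the filtration by $-a-b$, \ie, $\theta_{a,b}(\vb_{m}) \subseteq \vb_{m-a-b}$. Consequently, for any $\cY \in \vb_{m}$ the three twists occurring on the right all lie in $\vb_{m}$, and the displayed sequence becomes a short exact sequence of exact endofunctors $\mathrm{id} \rTo \theta_{1,0} \oplus \theta_{0,1} \rTo \theta_{1,1}$ of the \textsc{Waldhausen} category $\ch\vb_{m}$. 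Additivity then yields the homotopy identity
\[
  K(\mathrm{id}_{\ch\vb_{m}}) \simeq K(\theta_{1,0}) + K(\theta_{0,1}) - K(\theta_{1,1})
\]
of self-maps of $K(\vb_{m})$, valid for every integer~$m$.

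The crucial observation is that when $\cY \in \vb_{n+1}$ all three twists additionally lie in the smaller category $\vb_{n}$, so each $\theta_{a,b}|_{\ch\vb_{n+1}}$ admits a corestriction $\tilde\theta_{a,b} \colon \ch\vb_{n+1} \to \ch\vb_{n}$ with $\theta_{a,b}|_{\ch\vb_{n+1}} = F \circ \tilde\theta_{a,b}$. Setting
\[
  \Psi := K(\tilde\theta_{1,0}) + K(\tilde\theta_{0,1}) - K(\tilde\theta_{1,1}) \colon K(\vb_{n+1}) \to K(\vb_{n}) \ ,
\]
the additivity identity applied with $m = n+1$ becomes $K(F) \circ \Psi \simeq \mathrm{id}_{K(\vb_{n+1})}$. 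Symmetrically, the restriction of each $\theta_{a,b}$ to $\ch\vb_{n}$ factors as $\tilde\theta_{a,b} \circ F$, so the same identity applied with $m = n$ gives $\Psi \circ K(F) \simeq \mathrm{id}_{K(\vb_{n})}$. Thus $\Psi$ is a two-sided homotopy inverse of $K(F)$ and the lemma follows.

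The main technical point is the verification that the tensored cartesian sequence is genuinely a short exact sequence of \emph{exact} endofunctors of the \textsc{Waldhausen} category $\ch\vb_{m}$ (rather than merely an objectwise short exact sequence), so that additivity applies cleanly; this should reduce to naturality of the square in Proposition~\ref{prop:cartesian_square} and to the exactness of twisting recorded after~\eqref{eq:twisting}, which together guarantee that the sequence consists of cofibrations levelwise and is preserved by the cylinder construction.
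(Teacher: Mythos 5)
Your proof is correct and is essentially the same as the paper's: both rest on the filtration-shifting property of $\theta_{k,\ell}$ (the paper notes that $\theta_{k,\ell}$ maps $\vb_{n}$ into $\vb_{n}$ when $k+\ell\geq 0$ and $\vb_{n+1}$ into $\vb_{n}$ when $k+\ell\geq 1$, which is your $\theta_{a,b}(\vb_m)\subseteq\vb_{m-a-b}$), on the short exact sequence of functors $\id\rTo\theta_{1,0}\oplus\theta_{0,1}\rTo\theta_{1,1}$ obtained from~\eqref{eq:cartesian}, and on \textsc{Waldhausen}'s additivity theorem. You spell out more explicitly the factorisations through the inclusion and the resulting two-sided inverse identities $K(F)\circ\Psi\simeq\id$ and $\Psi\circ K(F)\simeq\id$, which the paper leaves as a terse one-sentence assertion; the content is the same.
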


\begin{proof}
  Let $k, \ell \in \bZ$. If $k + \ell \geq 0$ the
  functor~$\theta_{k,\ell}$ restricts to a functor
  \begin{displaymath}
    \vb_{n} \rTo \vb_{n}
  \end{displaymath}
  which we denote by the same symbol~$\theta_{k,\ell}$. In case
  $k+\ell \geq 1$ we may consider this as a functor
  $\theta_{k, \ell} \colon \vb_{n+1} \rTo \vb_{n}$.  From the exact
  sequence of twisting sheaves~\eqref{eq:cartesian} we conclude that
  there is a short exact sequence of functors
  \begin{displaymath}
    0 \rTo \id \rTo \theta_{1,0} \oplus \theta_{0,1} \rTo \theta_{1,1}
    \rTo 0
  \end{displaymath}
  (we have used $\theta_{0,0} \iso \id$ here). By the additivity
  theorem (\cite{MR802796}, Proposition~1.3.2~(4) and Theorem~1.4.2)
  that means that, on the level of $K$\nbd-groups, the identity map is
  the difference of the maps induced by
  $\theta_{1,0} \oplus \theta_{0,1}$ and $\theta_{1,1}$, both as
  endofunctors of $\vb_{n}$ and of $\vb_{n+1}$. This in turn implies
  that the difference of the induced maps of the functors
  $\theta_{1,0} \oplus \theta_{0,1}$ and $\theta_{1,1}$, considered as
  functors $\vb_{n+1} \rTo \vb_{n}$, is both left and right inverse to
  the map in $K$-theory induced by the inclusion functor
  $\vb_{n} \rTo^{\subseteq} \vb_{n+1}$.
\end{proof}

\begin{corollary}
  \label{cor:reduce_to_vb0}
  The inclusion $\vb_{0} \subseteq \vb$ induces a homotopy equivalence
  $\Omega |q \mathcal{S}_{\bullet} \ch \vb_{0}| \rTo^{\simeq}
  K(\pp)$.\qed
\end{corollary}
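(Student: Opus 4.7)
The plan is to combine the preceding Lemma with the filtration remark made just after Definition~\ref{def:K_P1}. Lemma~\ref{lem:H1_triv_high_twist} implies that every vector bundle belongs to some~$\vb_{n}$, and hence every bounded chain complex of vector bundles (being a finite collection of objects of~$\vb$) belongs to some~$\ch\vb_{n}$; in other words, $\ch\vb = \bigcup_{n \geq 0} \ch\vb_{n}$ as \textsc{Waldhausen} categories, where the structure (cofibrations and weak equivalences) on each~$\ch\vb_{n}$ is the restriction of that on~$\ch\vb$.

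First, iterating the preceding Lemma $n$~times yields a homotopy equivalence $K(\vb_{0}) \simeq K(\vb_{n})$ induced by the inclusion functor, for every $n \geq 0$. Next, the \textsc{Waldhausen} $\mathcal{S}_\bullet$\nbd-construction, the geometric realisation functor and the loop space functor all commute with filtered colimits (the first because $\mathcal{S}_\bullet$ is built degree-wise from diagram categories of finite shape, so a morphism in $\mathcal{S}_{n}\ch\vb$ factors through some $\mathcal{S}_{n}\ch\vb_{m}$; the latter two for standard point-set reasons). Therefore the canonical map
\begin{displaymath}
  \dirlim_{n} \Omega |q\mathcal{S}_{\bullet} \ch\vb_{n}| \rTo \Omega |q\mathcal{S}_{\bullet} \ch\vb | = K(\pp)
\end{displaymath}
is a homeomorphism, and in particular a homotopy equivalence.

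Finally, since all the structure maps in the filtered system $\big( K(\vb_{n}) \big)_{n \geq 0}$ are homotopy equivalences by the preceding Lemma, the inclusion of the initial stage $K(\vb_{0})$ into the colimit is also a homotopy equivalence. Composing with the homeomorphism above gives the desired homotopy equivalence $\Omega |q\mathcal{S}_{\bullet}\ch\vb_{0}| \rTo^{\simeq} K(\pp)$ induced by $\vb_{0} \subseteq \vb$.

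The only mildly delicate point is the commutation of $\Omega|q\mathcal{S}_{\bullet}(\nix)|$ with filtered colimits of \textsc{Waldhausen} categories along fully faithful, structure-preserving inclusions; this is a standard fact, but it is the one ingredient not developed in the paper and deserves an explicit citation.
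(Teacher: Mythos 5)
Your proposal is exactly the argument the paper has in mind: the sentence preceding the Lemma already records that $\ch\vb = \bigcup_{n\geq 0}\ch\vb_n$ (as a consequence of Lemma~\ref{lem:H1_triv_high_twist}), the Lemma supplies that each inclusion $\vb_n \subseteq \vb_{n+1}$ is a $K$-equivalence, and the Corollary is then dispatched with a bare \qed because the passage to the colimit is considered routine. You have simply made the routine step explicit, including correctly noting that a bounded complex of vector bundles lands in some $\ch\vb_n$ and that $|q\sdot(\nix)|$ and $\Omega$ commute with the relevant sequential colimit of CW-inclusions. This is the same proof, spelled out.
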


\section{Canonical sheaves and global sections}

We write $\chm$ for the category of bounded chain complexes of (right)
$R_{0}$\nbd-modules. Its full subcategory of bounded chain complexes
of finitely generated projective $R_{0}$\nbd-modules is
denoted~$\chp$. We consider the latter as a \textsc{Waldhausen}
category with weak equivalences the homotopy equivalences (or
equivalently, the quasi-isomorphisms), and cofibrations the injections
with levelwise projective cokernel. The associated $K$\nbd-theory
space is denoted by $K(R_{0}) = \Omega | h \sdot \chp |$; its homotopy
groups are the \textsc{Quillen} $K$\nbd-groups of the ring~$R_{0}$.

\medbreak

We define functors, the {\it canonical sheaves functors},
\begin{displaymath}
  \Psi_{k, \ell} \colon \chp \rTo \ch\vb \ , \quad C \mapsto C \tensor
  \OO k \ell
\end{displaymath}
where $C \tensor \OO k \ell$ denotes the bounded complex of sheaves
\begin{displaymath}
  C \tensor_{R_{0}} \Rnn k \rTo C \tensor_{R_{0}} \R \lTo C
  \tensor_{R_{0}} \Rpn {-\ell} \ .
\end{displaymath}
By Lemma~\ref{lem:projective_modules} the constituents of~$\OO k \ell$
are invertible bimodules over their respective ground rings. Hence
$C \tensor \OO k \ell$ consists of finitely generated projective
modules, and the functor $\Psi_{k,\ell}$ is an exact functor between
\textsc{Waldhausen} categories.

\begin{lemma}
  \label{lem:Psi_Gamma_adjoint}
  There are isomorphisms of \textsc{abel}ian groups
  \[\hom \big( \Psi_{0,0} (C),\, \cY \big) \iso \hom \big( C,\, \hh 0
  \cY \big)\]
  which are natural in~$C$ and in~$\cY$. In other words, the functor
  $\Psi_{0,0}$ is left adjoint to the global sections functor
  $\hh 0 {\nix}$.
\end{lemma}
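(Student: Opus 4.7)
My plan is to produce the adjunction isomorphism directly, by unwinding the data of a morphism $\Psi_{0,0}(C) \to \cY$ using three instances of the usual extension-of-scalars (tensor-hom) adjunction, one for each of the decorations $? \in \{-, 0, +\}$. The essential observation is that $\Psi_{0,0}(C)$ is built from $C$ by tensoring with $\Rn$, $R$ and $\Rp$, so the three components of a morphism are controlled by their restrictions to $C \tensor 1$; the compatibility with structure maps then forces these restrictions to pair up precisely as elements of $\hh 0 \cY$.

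In more detail, I would first treat the case where $C$ is concentrated in a single degree; the chain-complex version then follows by applying the argument componentwise and noting that the bijection respects differentials automatically. A morphism $f = (f^-, f^0, f^+) \colon \Psi_{0,0}(C) \to \cY$ consists of an $\Rn$-linear map $f^- \colon C \tensor_{R_{0}} \Rn \to Y^-$, an $R$-linear $f^0 \colon C \tensor_{R_{0}} R \to Y^0$ and an $\Rp$-linear $f^+ \colon C \tensor_{R_{0}} \Rp \to Y^+$, subject to the relations $f^0 \circ \iota^\pm = \upsilon^\pm \circ f^\pm$, where $\iota^\pm$ denote the structure maps of $\Psi_{0,0}(C)$ induced by the inclusions $\Rn, \Rp \hookrightarrow R$. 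By the standard adjunction, each $f^?$ corresponds bijectively to an $R_{0}$-linear map $g^? \colon C \to Y^?$ via $g^?(c) = f^?(c \tensor 1)$.

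Next I would translate the compatibility conditions. Because $\iota^\pm$ sends $c \tensor 1$ to $c \tensor 1$, the identity $f^0 \circ \iota^\pm = \upsilon^\pm \circ f^\pm$ becomes $g^0 = \upsilon^\pm \circ g^\pm$ at the level of $R_{0}$-linear maps. In particular $g^0$ is \emph{redundant}: once $g^-$ and $g^+$ are fixed subject to $\upsilon^- \circ g^- = \upsilon^+ \circ g^+$, the map $g^0$ is forced. Thus the data of $f$ is equivalent to a pair $(g^-, g^+)$ of $R_{0}$-linear maps from $C$ sitting in the equalizer of $\upsilon^-$ and $\upsilon^+$, i.e.\ a map $C \to \ker(\upsilon^- - \upsilon^+) = \hh 0 \cY$ (using that the functor $\hom_{R_{0}}(C, -)$ preserves kernels). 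This produces the desired bijection, and the naturality in $C$ and $\cY$ is immediate from the naturality of the tensor-hom adjunction and of the equalizer construction.

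I do not anticipate a genuine obstacle here: the argument is a formal manipulation of adjunctions and the explicit description of $\hh 0$ from Section~\ref{sec:cohomology}. The only point that requires a moment of care is the passage from a single module/sheaf to chain complexes, but this is entirely routine since the pointwise bijection is natural and therefore commutes with differentials automatically.
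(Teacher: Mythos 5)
Your proposal is correct and takes essentially the same approach as the paper: both rest on the componentwise extension-of-scalars adjunction together with the identification $\hh 0 \cY = \invlim \cY$, the paper passing from cone maps $C \to \hh 0 \cY$ to sheaf maps by tensoring up, and you unwinding sheaf maps down to a compatible pair $(g^-, g^+)$. (One harmless terminological slip: the condition $\upsilon^- \circ g^- = \upsilon^+ \circ g^+$ describes a pullback of a cospan rather than an equalizer of parallel arrows, but your subsequent identification with $\ker(\upsilon^- - \upsilon^+)$ is exactly right.)
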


\begin{proof}
  Since $\hh 0 \cY = \invlim \cY$, linear maps $C \rTo \hh 0 \cY$
  correspond bijectively to commutative diagrams of the type
  \begin{diagram}
    C & \rTo^{=} & C & \lTo^{=} & C \\ %
    \dTo && \dTo && \dTo \\ %
    Y^{-} & \rTo & Y^{0} & \lTo & Y^{+} %
  \end{diagram}
  with $\R_{0}$\nbd-linear vertical maps. By the usual extension of
  scalars construction, such diagrams correspond bijective to diagrams
  of the type
  \begin{diagram}
    C \tensor_{\R_{0}} \Rn & \rTo[l>=3em] & C \tensor_{\R_{0}} \R & %
    \lTo[l>=3em] & C \tensor_{\R_{0}} \Rp \\ %
    \dTo && \dTo && \dTo \\ %
    Y^{-} & \rTo & Y^{0} & \lTo & Y^{+} %
  \end{diagram}
  which are maps $\Psi_{0,0} (C) \rTo \cY$.
\end{proof}

\begin{lemma}
  \label{lem:H_of_Psi}
  For $C \in \chp$ we have natural isomorphisms
  \begin{displaymath}
    \Hh q {\Psi_{k, \ell} (C)} \iso C \tensor_{R_{0}} \Hh q {\OO k
      \ell} \ , \quad q = 0,1 \ ,
  \end{displaymath}
  where sheaf cohomology is computed in each chain level separately. 
  In particular, 
  \begin{enumerate}
  \item $\Hh 0 {\Psi_{0,0} (C)} \iso C$,
  \item $\Hh 0 {\Psi_{k,\ell} (C)} = 0$ if $k + \ell \leq -1$,
  \item $\Hh 1 {\Psi_{k,\ell} (C)} = 0$ if $k + \ell \geq -1$.
  \end{enumerate}
\end{lemma}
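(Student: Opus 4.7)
The plan is to reduce everything to a single application of flatness in each chain degree. Since sheaf cohomology is computed chainwise and all functors in sight commute with finite direct sums, it suffices to establish the isomorphism for a single finitely generated projective $R_{0}$-module $P$ in place of $C$, and then extend degreewise.

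First I would write down the defining four-term exact sequence~\eqref{eq:canonical} for the sheaf $\OO k \ell$, namely
\begin{displaymath}
    0 \rTo \Hh 0 {\OO k \ell} \rTo \Rnn k \oplus \Rpn{-\ell}
    \rTo[l>=3em]^{\subseteq - \supseteq} \R \rTo \Hh 1 {\OO k \ell} \rTo 0 \ .
\end{displaymath}
Tensoring on the left over $R_{0}$ with a finitely generated projective $R_{0}$-module $P$ preserves exactness, since $P$ is flat. The resulting four-term exact sequence has middle terms $P \tensor_{R_{0}} \Rnn k \oplus P \tensor_{R_{0}} \Rpn{-\ell}$ and $P \tensor_{R_{0}} \R$, which are precisely the ``$-$''$\oplus$``$+$'' and ``$0$'' constituents of $\Psi_{k,\ell}(P) = P \tensor \OO k \ell$, with the middle map being $\upsilon^{-} - \upsilon^{+}$ for this sheaf. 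Consequently the sequence is identified with the defining sequence~\eqref{eq:canonical} for the cohomology of $\Psi_{k,\ell}(P)$, and comparison of kernel and cokernel yields the natural isomorphisms $\Hh q {\Psi_{k,\ell}(P)} \iso P \tensor_{R_{0}} \Hh q {\OO k \ell}$ for $q = 0, 1$.

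Applying this identification in each chain degree of $C$ (and observing that the differentials of $\Psi_{k,\ell}(C)$ are induced from those of $C$ by tensoring with the identity on $\OO k \ell$) gives the desired natural isomorphism of complexes of $R_{0}$-modules. Naturality in $C$ is manifest since the whole construction is functorial in the first tensor factor; naturality in $\OO k \ell$ (hence in the parameters $k, \ell$, if one wishes) follows similarly.

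For the three special cases I would simply insert the calculations of Proposition~\ref{prop:H_of_On}. Case~(1) uses $\Hh 0 {\OO 0 0} \iso R_{0}$, giving $\Hh 0 {\Psi_{0,0}(C)} \iso C \tensor_{R_{0}} R_{0} \iso C$. Case~(2) uses $\Hh 0 {\OO k \ell} = 0$ whenever $k + \ell < 0$, in particular for $k + \ell \leq -1$. Case~(3) requires a small arithmetic check: by Proposition~\ref{prop:H_of_On} we have $\Hh 1 {\OO k \ell} = 0$ for $k + \ell \geq 0$, while for $k + \ell = -1$ the indexing range $\{k+1, \ldots, -\ell - 1\} = \{k+1, \ldots, k\}$ is empty, so $\Hh 1 {\OO k \ell}$ again vanishes. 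There is no real obstacle here; the content of the lemma is entirely the flatness step in the first paragraph, and the rest is bookkeeping.
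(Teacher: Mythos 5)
Your proposal is correct and follows the same route as the paper: start from the canonical four-term exact sequence~\eqref{eq:canonical} for $\OO k \ell$, tensor with the (flat, since projective) complex $C$ over $R_0$, recognise the result as the canonical sequence for $\Psi_{k,\ell}(C)$, and read off the isomorphism; the special cases then drop out of Proposition~\ref{prop:H_of_On}. Your observation that the index range $\{k+1,\dots,-\ell-1\}$ is empty when $k+\ell=-1$ is exactly the (implicit) arithmetic the paper relies on for case~(3), and your reduction to a single chain degree is a harmless stylistic repackaging of the paper's direct chain-level computation.
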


\begin{proof}
  For $\cY = \OO {k} {\ell}$ the canonical
  sequence~\eqref{eq:canonical} becomes an exact sequence of
  $R_{0}$-$R_{0}$-bimodule chain complexes
  \begin{multline*}
    0 \rTo \Hh 0 {\OO {k} {\ell}} \rTo \Rnn {k} \oplus \Rpn {-\ell}
    \\%
    \rTo \R \rTo \Hh 1 {\OO {k} {\ell}} \rTo 0 \ .
  \end{multline*}
  Tensoring this with $C$ over~$R_{0}$ yields another exact sequence
  as $C$ consists of projective $R_{0}$\nbd-modules; the result is in
  fact the canonical sequence~\eqref{eq:canonical} for the chain
  complex of vector bundles $\cY = \Psi_{k,\ell}(C)$, which thus has
  $q$th cohomology isomorphic to
  $C \tensor_{R_{0}} \Hh q {\OO {k} {\ell}}$. --- The assertions (1),
  (2) and~(3) follow from what we just proved and the calculations of
  Proposition~\ref{prop:H_of_On}: \goodbreak
  \begin{enumerate}
  \item $\hh 0 {\OO 0 0} \iso \R_{0}$;
  \item $\hh 0 {\OO k \ell} = 0$ if $k+\ell \leq -1$;
  \item $\hh 1 {\OO k \ell} = 0$ if $k+\ell \geq -1$.
  \end{enumerate}
\end{proof}

\begin{corollary}
  \label{cor:Psi_factors}
  Let $i,j \in \bZ$. If $i + j \geq -1$, the functor $\Psi_{i,j}$
  factors through the category~$\ch \vb_{0}$.
\end{corollary}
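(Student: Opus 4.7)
The plan is to reduce the claim to a cohomology calculation that has already been done. I need to show that for $C \in \chp$ and $i+j \geq -1$, the chain complex of vector bundles $\Psi_{i,j}(C)$ lies in $\ch\vb_{0}$; that is, that $\Hh 1 {\Tw {\Psi_{i,j}(C)} k \ell} = 0$ for every pair $k, \ell \in \bZ$ with $k + \ell \geq 0$ (working level-wise in the chain direction, since cohomology of chain complexes of sheaves is computed level-wise).

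The first step is to identify twists of canonical sheaves with other canonical sheaves. Using the bimodule isomorphisms from Lemma~\ref{lem:bimodules} (specifically~\eqref{iso:negm-negn}, \eqref{iso:posm-posn}, and the isomorphism $\R \tensor_{\R} \R \iso \R$) together with associativity of the tensor product, there is a natural isomorphism
\begin{equation*}
  \Tw {\Psi_{i,j}(C)} k \ell \iso \Psi_{i+k,\, j+\ell}(C) \ .
\end{equation*}
This holds at each of the three components: for the ``$-$''-component, for instance, one has
\begin{equation*}
  \big( C \tensor_{R_{0}} \Rnn i \big) \tensor_{\Rn} \Rnn k \iso C \tensor_{R_{0}} \big( \Rnn i \tensor_{\Rn} \Rnn k \big) \iso C \tensor_{R_{0}} \Rnn {i+k} \ ,
\end{equation*}
and similarly for the other two components; these isomorphisms are compatible with the inclusion structure maps.

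Having made this identification, apply Lemma~\ref{lem:H_of_Psi} to obtain a natural isomorphism
\begin{equation*}
  \Hh 1 {\Tw {\Psi_{i,j}(C)} k \ell} \iso \Hh 1 {\Psi_{i+k,\, j+\ell}(C)} \iso C \tensor_{R_{0}} \Hh 1 {\OO {i+k} {j+\ell}} \ .
\end{equation*}
Now the arithmetic inequality $i + j \geq -1$ combined with the twisting assumption $k + \ell \geq 0$ yields $(i+k) + (j+\ell) \geq -1$. By Proposition~\ref{prop:H_of_On} (or directly by part~(3) of Lemma~\ref{lem:H_of_Psi}) this guarantees $\Hh 1 {\OO {i+k} {j+\ell}} = 0$, hence the displayed cohomology vanishes.

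There is essentially no obstacle here; the only point requiring a moment's thought is the natural identification $\Tw {\Psi_{i,j}(C)} k \ell \iso \Psi_{i+k, j+\ell}(C)$, but this reduces immediately to the bimodule isomorphisms recorded in Lemma~\ref{lem:bimodules}, so the argument is very short. Once this identification is in place, the conclusion follows from the vanishing calculations already established for the twisting sheaves themselves.
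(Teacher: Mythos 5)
Your proof is correct and follows the same route as the paper: identify $\Tw {\Psi_{i,j}(C)} k \ell \iso \Psi_{i+k,\,j+\ell}(C)$ via the bimodule isomorphisms underlying~\eqref{eq:twisting}, then invoke Lemma~\ref{lem:H_of_Psi}(3) with $(i+k)+(j+\ell)\geq -1$. Your version just spells out the reduction to the component-level isomorphisms a bit more explicitly.
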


\begin{proof}
  We need to verify that $\Hh 1 {\tw {\Psi_{i,j}(C)} k \ell} = 0$ if
  $k+\ell \geq 0$, for any chain complex $C \in \chp$.
  By~\eqref{eq:twisting} we have an isomorphism
  $\tw {\Psi_{i,j}(C)} k \ell \iso \Psi_{i+k, j+\ell} (C)$ and thus,
  by the previous Lemma, the vanishing result $\Hh 1 {\tw {\Psi_{i,j}(C)} k \ell} = 0$.
\end{proof}

For $i+j \geq -1$ the functor $\Psi_{i,j}$ associates a bounded
complex of vector bundles in~$\vb_{0}$ with a bounded complex of
$R_{0}$\nbd-modules. We will need a functor going the other direction;
we use the global sections functor $\Gamma = \hh 0
{\nix}$. Explicitly,
\begin{displaymath}
  \Gamma \colon \ch \vb_{0} \rTo \chp \ , \quad \cY \mapsto \hh 0 \cY \ .
\end{displaymath}
This is well defined by virtue of
Theorem~\ref{thm:crucial_finiteness}: the complex $\hh 0 \cY$ consists
of finitely generated projective $R_{0}$\nbd-modules for
$\cY \in \ch \vb_{0}$. Note also that $\Gamma$ is an (additive) exact
functor, \ie, $\Gamma$ maps short exact sequences in~$\ch\vb_{0}$ to
short exact sequences in~$\chp$; this follows immediately from the
fact that twisting is an additive exact functor, and
from~\eqref{eq:not-so-long} as all first cohomology modules
vanish. Consequently, the functor~$\Gamma$ automatically preserves all
pushouts that exist in~$\vb_{0}$.

\begin{lemma}
  \label{lem:Gamma_exact}
  The functor $\Gamma \colon \ch\vb_{0} \rTo \chp$ is an exact functor
  of \textsc{Waldhausen} categories.
\end{lemma}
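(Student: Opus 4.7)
The plan is to verify the standard defining conditions of an exact functor of \textsc{Waldhausen} categories: $\Gamma$ preserves the zero object (immediate), sends cofibrations to cofibrations, sends weak equivalences to weak equivalences, and preserves pushouts along cofibrations. Preservation of pushouts and the additive exactness of~$\Gamma$ are already spelled out in the paragraph immediately preceding the statement, and the well-definedness of $\Gamma$ as a functor into~$\chp$ (\ie\ that $\hh 0 {\cY_{n}}$ is finitely generated projective over $R_{0}$ at every chain level~$n$) is Theorem~\ref{thm:crucial_finiteness} applied levelwise. So only two items require a real argument.

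\textbf{Preservation of cofibrations.} Let $\alpha \colon \cX \to \cY$ be a cofibration in $\ch \vb_{0}$, so $\alpha$ is an injection and the cokernel $\cZ := \coker (\alpha)$ is a chain complex of vector bundles. Applying Lemma~\ref{lem:coker_is_vb_n} at each chain level refines this to $\cZ \in \ch \vb_{0}$. The additive exactness of~$\Gamma$ then yields a short exact sequence
\begin{displaymath}
  0 \rTo \Gamma \cX \rTo \Gamma \cY \rTo \Gamma \cZ \rTo 0
\end{displaymath}
in $\chp$, with $\Gamma \cZ$ again a complex of finitely generated projective $R_{0}$\nbd-modules by Theorem~\ref{thm:crucial_finiteness}. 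Hence $\Gamma (\alpha)$ is an injection with levelwise projective cokernel, \ie\ a cofibration in~$\chp$.

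\textbf{Preservation of quasi-isomorphisms.} The key observation is that for any $\cY \in \ch \vb_{0}$ the vanishing $\hh 1 {\cY_{n}} = 0$ holds at every chain level~$n$ (take $(k,\ell) = (0,0)$ in Definition~\ref{def:vb_n}), so the canonical sequence~\eqref{eq:canonical} degenerates levelwise to a short exact sequence
\begin{displaymath}
  0 \rTo \hh 0 {\cY_{n}} \rTo Y^{-}_{n} \oplus Y^{+}_{n} \rTo Y^{0}_{n} \rTo 0 \ .
\end{displaymath}
Since the differentials of~$\cY$ are morphisms of sheaves, and hence commute with $\upsilon^{\pm}$, these sequences assemble into a short exact sequence of chain complexes $0 \to \Gamma \cY \to Y^{-} \oplus Y^{+} \to Y^{0} \to 0$. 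Given a quasi-isomorphism $f \colon \cX \to \cY$ in~$\ch \vb_{0}$, I would compare the analogous sequences for $\cX$ and~$\cY$, pass to the associated long exact sequences in homology, and invoke the five lemma: by the definition of $q$\nbd-equivalence the maps $f^{-}$, $f^{+}$ and $f^{0}$ induce isomorphisms on homology in every degree, which forces $\Gamma (f)$ to do so as well.

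The only obstacle I anticipate is notational rather than conceptual: carefully keeping the sheaf-cohomology index (the vanishing of~$H^{1}$) separate from the chain-complex index~$n$, and verifying that the levelwise short exact sequences really glue into a short exact sequence of chain complexes. Once this book-keeping is done, both parts of the argument are immediate.
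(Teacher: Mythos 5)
Your proof is correct and takes essentially the same approach as the paper. Part (1), preservation of cofibrations, is word-for-word the paper's argument (Lemma~\ref{lem:coker_is_vb_n} to place the cokernel in~$\vb_{0}$, additive exactness of~$\Gamma$, then observe the resulting short exact sequence consists of complexes of projectives). For part~(2) you and the paper both start from the degeneration of~\eqref{eq:canonical} to the short exact sequence of chain complexes $0 \to \Gamma\cY \to Y^{-}\oplus Y^{+} \to Y^{0} \to 0$; the only difference is cosmetic: you extract a long exact homology sequence and apply the five lemma directly, whereas the paper packages the same information as the quasi-isomorphism invariance of the totalisation (``homotopy fibre'') of $Y^{-}\oplus Y^{+} \to Y^{0}$. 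Your five-lemma version is slightly more elementary and arguably cleaner, but it is the same argument.

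Two small points worth tightening if you write this up: (i) the vanishing $\hh 1 {\cY_{n}} = 0$ at each level follows from Definition~\ref{def:vb_n} together with the isomorphism $\tw\cY 0 0 \iso \cY$ from~\eqref{eq:twisting}, which you should cite rather than treat as literal equality; (ii) when you say the levelwise sequences ``assemble'' into a short exact sequence of chain complexes, you should note that $\Gamma\cY = \invlim\cY$ is formed degreewise and that the maps in~\eqref{eq:canonical} are natural in~$\cY$, so that the differentials are automatically compatible. Both are routine, but are exactly the ``book-keeping'' you flagged.
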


\begin{proof}
  The non-trivial points to check are the following:
  \begin{enumerate}
  \item If $f \colon \cY \rTo \cZ$ is a cofibration, then $\Gamma(f)$
    is a cofibration.
  \item If $f \colon \cY \rTo \cZ$ is a weak equivalence, then
    $\Gamma(f)$ is a weak equivalence.
  \end{enumerate}

  To prove~(1) recall that $\coker (f)$ is, by definition of
  cofibrations, a complex of vector bundles, by
  Lemma~\ref{lem:coker_is_vb_n} in fact a complex of vector bundles
  in~$\vb_{0}$. As $\Gamma$ is additively exact we obtain a short
  exact sequence
  \begin{displaymath}
    0 \rTo \Gamma (\cY) \rTo^{\Gamma(f)} \Gamma (\cZ) \rTo \Gamma
    (\coker f) \rTo 0
  \end{displaymath}
  of $R_{0}$\nbd-modules. But this sequence consists of chain
  complexes of projective modules, so $\Gamma(f)$ is a levelwise split
  injection with levelwise projective cokernel, that is, is a
  cofibration in~$\chp$.

  As for~(2), since we are working with vector bundles in~$\vb_{0}$
  the cohomology sequence~\eqref{eq:not-so-long} reduces to a short
  exact sequence
  \begin{displaymath}
    0 \rTo \Gamma(\cY) \rTo Y^{-} \oplus Y^{+}
    \rTo^{\upsilon^{-}-\upsilon^{+}} Y^{0} \rTo 0
  \end{displaymath}
  so that $\Gamma(\cY)$ is quasi-isomorphic to the ``homotopy fibre''
  $\mathbf{H}(\upsilon^{-}-\upsilon^{+})$ of the map
  $\upsilon^{-}-\upsilon^{+}$, \ie, to the totalisation of
  $Y^{-} \oplus Y^{+} \rTo^{\upsilon^{-}-\upsilon^{+}} Y^{0}$
  considered as a double complex. But this latter construction is
  known to be invariant under quasi-isomorphisms, so that a weak
  equivalence $f \colon \cY \rTo^{\sim} \cZ$ yields a square diagram
  \begin{diagram}
    \Gamma(\cY) & \rTo[l>=2.5em]^{\sim} & \mathbf{H}(\upsilon^{-}-\upsilon^{+}) \\
    \dTo<{\Gamma(f)} && \dTo<{\sim} \\
    \Gamma(\cZ) & \rTo^{\sim} & \mathbf{H}(\zeta^{-}-\zeta^{+})
  \end{diagram}
  whence $\Gamma(f)$ is a quasi-isomorphism as required.
\end{proof}

\section{Weak equivalences of complexes of sheaves}
\label{sec:weqs}

As usual in the context of \textsc{Waldhausen} $K$-theory we need
various notions of weak equivalences on the category concerned. As
defined before, a quasi-iso\-mor\-phism in $\ch \vb_0$ is map
$f \colon \cY \rTo \cZ$ such that its components~$f^{?}$, for
$? \in \{-,0,+\}$, are quasi-isomorphisms of chain complexes of
modules.

\begin{lemma}
\label{lem:characterise_h_0_equiv}
  Let $f \colon \cY \rTo \cZ$ be a map in $\ch \vb$, and let
  $k_{0},\ell_{0},n \in \bZ$ with $k_{0} + \ell_{0} = n$ be fixed
  integers. Each of the following statements implies the others:
  \begin{enumerate}
  \item The map $\Gamma \circ \theta_{k_{0},\ell_{0}} (f)$ is a
    quasi-isomorphism in~$\chm$.
  \item There exist $k,\ell \in \bZ$ with $k + \ell = n$ such that the
    map $\Gamma \circ \theta_{k,\ell} (f)$ is a quasi-isomorphism
    in~$\chm$.
  \item For all $k,\ell \in \bZ$ with $k + \ell = n$, the map
    $\Gamma \circ \theta_{k,\ell} (f)$ is a quasi-isomorphism
    in~$\chm$.
  \end{enumerate}
\end{lemma}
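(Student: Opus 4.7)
The implications $(3) \Rightarrow (1) \Rightarrow (2)$ are immediate, so the only content of the lemma is $(2) \Rightarrow (3)$. My plan is to exploit Theorem~\ref{thm:cohomology_of_twist} to identify the various maps $\Gamma \circ \theta_{k,\ell}(f)$, for different pairs $(k,\ell)$ with $k + \ell = n$, up to tensoring with an invertible $R_{0}$-bimodule, and then to invoke flatness of homogeneous components to move quasi-isomorphisms between these pairs.

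Concretely, suppose (2) holds at some $(k',\ell')$ with $k' + \ell' = n$, and let $(k,\ell)$ be another such pair. Setting $m = k - k'$, I have $\ell - \ell' = -m$, and the iterated-twist identification~\eqref{eq:twisting}, applied levelwise, gives a natural isomorphism $\tw \cW k \ell \iso \tw{\tw \cW {k'} {\ell'}}{m}{-m}$ for every $\cW \in \ch\vb$. Applying Theorem~\ref{thm:cohomology_of_twist} in each chain degree to $\tw \cW {k'} {\ell'}$ then yields a natural isomorphism of chain complexes of $R_{0}$-modules
\begin{displaymath}
  \Gamma \circ \theta_{k,\ell}(\cW) \iso \bigl(\Gamma \circ \theta_{k',\ell'}(\cW)\bigr) \tensor_{R_{0}} R_{m} \ .
\end{displaymath}
Naturality in $\cW$ identifies $\Gamma \circ \theta_{k,\ell}(f)$ with $\Gamma \circ \theta_{k',\ell'}(f) \tensor_{R_{0}} R_{m}$.

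Now $R_{m}$ is a finitely generated projective, hence flat, $R_{0}$-module by Lemma~\ref{lem:projective_modules}, so tensoring with $R_{m}$ preserves quasi-isomorphisms (equivalently, preserves acyclicity of mapping cones). Thus the assumption that $\Gamma \circ \theta_{k',\ell'}(f)$ is a quasi-isomorphism forces the same for $\Gamma \circ \theta_{k,\ell}(f)$, which proves~(3). The argument contains no real obstacle; the only point that needs acknowledgement is that Theorem~\ref{thm:cohomology_of_twist}, originally stated for a single sheaf, extends without fuss to bounded chain complexes and remains natural, because cohomology in degrees~$0$ and~$1$ is computed levelwise as a kernel and cokernel, and twisting and tensoring with~$R_{m}$ are likewise levelwise operations.
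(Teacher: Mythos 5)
Your proof is correct and takes essentially the same approach as the paper: both reduce to Theorem~\ref{thm:cohomology_of_twist} (applied levelwise to complexes), which identifies $\Gamma\circ\theta_{k,\ell}(f)$ with a tensor product of $\Gamma\circ\theta_{k',\ell'}(f)$ by $R_m$, and then invoke the good module-theoretic properties of $R_m$ from Lemmas~\ref{lem:projective_modules} and~\ref{lem:bimodules} to transport quasi-isomorphisms. The only cosmetic difference is that the paper first normalises to $n=0$ via $\theta_{n,0}$ and phrases the transfer as an ``if and only if'' using invertibility of $R_k$, whereas you compare two arbitrary pairs directly and use flatness one-directionally; both are sound.
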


\begin{proof}
  First, we may replace the map $f$ by $\theta_{n,0}(f)$ and thus
  reduce to the case $n = 0$, using the
  isomorphisms~(\ref{eq:twisting}). So assume $n = 0$. For
  $k + \ell = 0$ there is a natural isomorphism
  $\Gamma \circ \theta_{k,\ell} (f) \iso \Gamma (f) \tensor_{R_{0}}
  R_{k}$,
  by Theorem~\ref{thm:cohomology_of_twist}. As $R_{k}$ is a projective
  left $R_{0}$\nbd-module by Lemma~\ref{lem:projective_modules}, and
  even an invertible $R_{0}$\nbd-bimodule by Lemma~\ref{lem:bimodules},
  we conclude that $\Gamma(f)$ is a quasi-isomorphism if and only if
  $\Gamma \circ \theta_{k,\ell}$ is. The result follows.
\end{proof}

\begin{definition}
  \label{def:h_0_1}
  A map $f \colon \cY \rTo \cZ$ in $\ch \vb_0$ is a {\it
    $q_{0}$\nbd-equivalence\/} if it satisfies one (and hence all) of
  the three equivalent conditions listed in
  Lemma~\ref{lem:characterise_h_0_equiv} for $n=0$. We call $f$ a {\it
    $q_{1}$\nbd-equivalence\/} if it satisfies one (and hence all) of
  the equivalent conditions listed in
  Lemma~\ref{lem:characterise_h_0_equiv} for $n=0$ and for $n=1$.
\end{definition}

Equipped with $q_{?}$\nbd-equivalences, $? = 0,1$, and the previous
notion of cofibrations the category $\ch \vb_{0}$ is a
\textsc{Waldhausen} category satisfying the saturation and extension
axioms. The gluing lemma follows from the fact that twisting preserves
pushouts, and from the fact that~$\Gamma$ preserves pushouts and
cofibrations by Lemma~\ref{lem:Gamma_exact}, as the category $\chp$ is
known to satisfy the gluing lemma. The usual mapping cylinder
construction provides a cylinder functor which satisfies the cylinder
axiom.

\section{Acyclic complexes of vector bundles}

We turn our attention to a characterisation of acyclic complexes of
vector bundles with trivial higher cohomology: If global sections of
``zeroth'' and ``first'' twists are trivial, the complex is
quasi-isomorphic to the zero complex. The precise result is formulated
as follows:

\begin{proposition}
  \label{prop:0_1__acyclic_is_trivial}
  Let $\cY \in \ch \vb_{0}$ be a complex of vector bundles such that
  the canonical map $\zeta \colon \cY \rTo 0$ is a
  $q_{1}$\nbd-equivalence, cf.~Definition~\ref{def:h_0_1}. Then
  $\zeta$ is a $q$\nbd-equivalence, that is, $\cY$ is quasi-isomorphic
  to the zero complex.
\end{proposition}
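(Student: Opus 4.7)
The plan proceeds in three stages: extend the acyclicity of $\hh 0 {\tw \cY k \ell}$ from the given values $k + \ell \in \{0,1\}$ to the full range $k+\ell \geq 0$, use a colimit argument to deduce acyclicity of $Y^0$, and then obtain acyclicity of $Y^-$ and $Y^+$ from the canonical cohomology sequence at $(k,\ell)=(0,0)$.

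For the first stage, by Lemma~\ref{lem:characterise_h_0_equiv} and Definition~\ref{def:h_0_1} the hypothesis on $\zeta$ means $\hh 0 {\tw \cY k \ell}$ is acyclic whenever $k+\ell \in \{0,1\}$. I argue by induction on $m = k+\ell$: tensoring the cartesian sequence of Proposition~\ref{prop:cartesian_square} with $\cY$ and applying~\eqref{eq:not-so-long}---noting that $\cY \in \vb_0$ forces all four first-cohomology terms to vanish once $k+\ell \geq 0$---produces a short exact sequence
\[0 \to \hh 0 {\tw \cY k \ell} \to \hh 0 {\tw \cY {k+1} \ell} \oplus \hh 0 {\tw \cY k {\ell+1}} \to \hh 0 {\tw \cY {k+1}{\ell+1}} \to 0\]
of chain complexes of $R_0$-modules. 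The long exact homology sequence makes the rightmost term acyclic as soon as the first two are, completing the induction.

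For the second stage, fix $n \geq 0$. Since $\hh 1 {\tw \cY n n} = 0$, the canonical sequence~\eqref{eq:canonical} reduces to a short exact sequence
\[0 \to \hh 0 {\tw \cY n n} \to Y^- \tensor_{\Rn} \Rnn n \oplus Y^+ \tensor_{\Rp} \Rpn{-n} \xrightarrow{\phi_n} Y^0 \to 0\]
with $\phi_n(y \tensor r,\, z \tensor s) = \upsilon^-(y) r - \upsilon^+(z) s$. The first stage renders the kernel acyclic, so each $\phi_n$ is a quasi-isomorphism. The inclusions $\Rnn n \subseteq \Rnn{n+1}$ and $\Rpn{-n} \subseteq \Rpn{-(n+1)}$ make $(\phi_n)_{n \geq 0}$ a directed system whose colimit, by the sheaf-condition isomorphisms $\upsilon^-_\sharp$ and $\upsilon^+_\sharp$ and the commutation of tensor products with filtered colimits, is the map $\phi_\infty \colon Y^0 \oplus Y^0 \to Y^0$, $(a,b) \mapsto a-b$. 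Filtered colimits of quasi-isomorphisms are quasi-isomorphisms, so $\phi_\infty$ induces an isomorphism $H_\ast(Y^0) \oplus H_\ast(Y^0) \to H_\ast(Y^0)$, $(\alpha,\beta) \mapsto \alpha - \beta$; its kernel, the diagonal, is isomorphic to $H_\ast(Y^0)$ and must therefore vanish, so $Y^0$ is acyclic.

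For the third stage, specialising the previous short exact sequence to $n=0$ yields $0 \to \hh 0 \cY \to Y^- \oplus Y^+ \to Y^0 \to 0$ with acyclic first term, and the long exact homology sequence together with acyclicity of $Y^0$ forces $Y^- \oplus Y^+$, hence each of $Y^-$ and $Y^+$, to be acyclic. Thus $\zeta$ is a $q$\nbd-equivalence, as required. I expect the main obstacle to be the second stage: the $(k,\ell)=(0,0)$ case of the canonical sequence by itself only produces a quasi-isomorphism $Y^- \oplus Y^+ \simeq Y^0$ and is insufficient; one really has to employ the whole family of $\phi_n$ so that, in the colimit, the two $Y^\pm$-contributions collapse into two identical copies of $Y^0$ whose formal difference isolates $Y^0$ itself.
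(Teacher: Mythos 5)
Your argument is correct, and it reaches the conclusion by a genuinely different route from the paper. The paper considers the quotient sheaf $\OO 1 0 / \OO 0 0$ (which is concentrated in the ``$-$''\nbd-component), tensors it against $\cY$, deduces that $\coker(\alpha^-)$ is acyclic and hence that the inclusion $Y^- \to Y^- \tensor_{\Rn} \Rnn 1$ is a \emph{homotopy} equivalence of bounded complexes of projectives, propagates this along the chain $Y^- \tensor_{\Rn} \Rnn p \to Y^- \tensor_{\Rn} \Rnn{p+1}$, passes to the colimit to conclude $\upsilon^-$ (and symmetrically $\upsilon^+$) is a quasi-isomorphism, and then recovers acyclicity of $Y^0$ by comparing $\cY$ to the constant diagram on $Y^0$ and invoking $\Gamma(\cY) \simeq 0$. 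You instead amplify the hypothesis via the cartesian-square induction (Proposition~\ref{prop:cartesian_square} and~\eqref{eq:not-so-long}) to get acyclicity of $\hh 0 {\tw \cY k \ell}$ for all $k + \ell \geq 0$, then take the filtered colimit of the quasi-isomorphisms $\phi_n$ (rather than of the structure maps) to show $Y^0 \oplus Y^0 \to Y^0$, $(a,b) \mapsto a-b$, is a quasi-isomorphism, and finally extract $H_*(Y^0) = 0$ from the observation that the kernel of $(\alpha,\beta) \mapsto \alpha - \beta$ is a diagonal copy of $H_*(Y^0)$. Both proofs exploit the same structural features --- the cartesian square, the vanishing of $H^1$ on $\vb_0$, and the sheaf condition realised in the limit --- but your version never needs the ``acyclic bounded complex of projectives is contractible'' step; in exchange you need the extra inductive Stage~1, which the paper's proof avoids by getting by on levels $0$ and $1$ alone. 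The diagonal-kernel step in Stage~2 is a nice shortcut worth keeping in mind.
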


\begin{proof}
  The exact sequence of sheaves
  \begin{displaymath}
    0 \rTo \OO 0 0 \rTo \OO 1 0 \rTo \OO 1 0 / \OO 0 0 \rTo 0
  \end{displaymath}
  yields, by pointwise tensor product with the complex of vector
  bundles~$\cY$, an exact sequence of complexes of sheaves
  \begin{displaymath}
    0 \rTo \cY \rTo^{\alpha} \cY(1,0) \rTo \cZ \rTo 0
  \end{displaymath}
  with $\cZ = \coker(\alpha)$ satisfying $Z^{0} = 0$ and $Z^{+} = 0$.
  Explicitly, we obtain the diagram of chain complexes of
  $R_{0}$\nbd-modules with exact columns depicted in Fig.~\ref{fig:3x3}.
  \begin{figure}[ht]
    \centering
    \begin{diagram}[small]
      0 && 0 && 0 \\%
      \dTo && \dTo && \dTo \\
      Y^{-} \tensor_{\Rn} \Rnn 0 & \rTo[l>=4em] & Y^{0} \tensor_{\R}
      \R & \lTo[l>=4em] & Y^{+} \tensor_{\Rp} \Rpn 0 \\%
      \dTo>{\alpha^{-}} && \dTo>{\alpha^{0} = \id} && \dTo>{\alpha^{+}
        = \id} \\%
      Y^{-} \tensor_{\Rn} \Rnn 1 & \rTo & Y^{0} \tensor_{\R} \R & \lTo
      & Y^{+} \tensor_{\Rp} \Rpn 0 \\%
      \dTo && \dTo && \dTo \\%
      Y^{-} \tensor_{\Rn} R_{1} & \rTo & 0 & \lTo & 0 \\%
      \dTo && \dTo && \dTo \\
      0 && 0 && 0 %
    \end{diagram}
    \caption{Diagram used in proof of
      Proposition~\ref{prop:0_1__acyclic_is_trivial}}
    \label{fig:3x3}
  \end{figure}
  The $\Rn$\nbd-module structure of $R_{1}$ in the third non-trivial
  row is determined by saying that all elements of strictly negative
  degree act on~$R_{1}$ as multiplication by~$0$. --- The first
  horizontal slice is $\tw {\cY} 0 0 \iso \cY$, while the second is
  the vector bundle $\theta_{1,0} (\cY) = \tw {\cY} 1 0$. The first
  slice has levelwise trivial $\invlim{}^{\!1}$; this is true by
  definition of~$\vb_{0}$ and by the equality
  $\invlim{}^{\!1} = \hh 1 {\nix}$.  As a consequence, applying the
  functor $\invlim$ results in a short exact sequence of
  $R_{0}$\nbd-modules
  \begin{displaymath}
    0 \rTo \Gamma (\cY) \rTo \Gamma \circ \theta_{1,0} (\cY) \rTo
    Y^{-} \tensor_{\Rn} R_{1} \rTo 0
  \end{displaymath}
  with contractible first and second terms, by our hypothesis
  on~$\cY$. So the complex
  $Y^{-} \tensor_{\Rn} R_{1} = \coker (\alpha^{-})$ has trivial
  homology. But $\alpha^{-}$ is a map of bounded chain complexes of
  projective $\Rn$\nbd-modules, whence $\alpha^{-}$ is actually a
  homotopy equivalence. By tensoring over~$\Rn$ with~$\Rnn p$ from
  the right, and using the $\Rn$\nbd-bimodule isomorphism
  $\Rnn {p'} \tensor_{\Rn} \Rnn{p} \iso \Rnn {p'+p}$, we conclude that the
  map
  \begin{displaymath}
    \alpha^{-}_{p} \colon Y^{-} \tensor_{\Rn} \Rnn {p} \rTo Y^{-}
    \tensor_{\Rn} \Rnn {p+1}
  \end{displaymath}
  is a homotopy equivalence for all $p \geq 0$. But then the map
  $\mu^{-}$ from $Y^{-} \iso Y^{-} \tensor_{\Rn} \Rnn 0$ to the
  colimit~$W^{-}$ of the sequence
  \begin{displaymath}
    Y^{-} \tensor_{\Rn} \Rnn 0 \rTo^{\alpha^{-}_{0}}  Y^{-} \tensor_{\Rn}
    \Rnn {1} \rTo^{\alpha^{-}_{1}}  Y^{-} \tensor_{\Rn} \Rnn {2}
    \rTo^{\alpha^{-}_{2}} \ldots
  \end{displaymath}
  is a quasi-isomorphism since homology commutes with filtered
  colimits. As this colimit is precisely
  $W^{-} = Y^{-} \tensor_{\Rn} \bigcup_{p \geq 0} \Rnn p = Y^{-}
  \tensor_{\Rn} \R$,
  and as $\cY$ is a complex of vector bundles, the composite map
  \begin{displaymath}
    Y^{-} \rTo^{\mu^{-}}_{\sim} W^{-} = Y^{-} \tensor_{\Rn} \R
    \rTo_{\iso}^{\upsilon^{-}_{\sharp}} Y^{0} 
  \end{displaymath}
  is a quasi-isomorphism. That is, the structure map
  $Y^{-} \rTo^{\upsilon^{-}} Y^{0}$ of~$\cY$ is a
  quasi-isomorphism. --- A similar argument, employing 
  $\cY(0,1)$ in place of~$\cY(1,0)$, shows that the structure map
  $Y^{0} \lTo Y^{+}$ is a quasi-isomorphism as well.

  We have shown that $\cY$ is quasi-isomorphic, as a diagram of chain
  complexes of projective $R_{0}$-modules, to the constant diagram
  \begin{displaymath}
    \cC_{Y} = \Big( Y^{0} \rTo^{\id} Y^{0} \lTo^{\id} Y^{0} \Big) \ ,
  \end{displaymath}
  which has levelwise trivial $\invlim{}^{\!1}$. It follows that
  \begin{displaymath}
    Y^{0} \underset {\text{(i)}} \iso \invlim \cC_{Y} \underset
    {\text{(ii)}} \simeq \invlim \cY = \Gamma (\cY) \simeq 0 \ ,
  \end{displaymath}
  that is, $Y^{0}$ is acyclic. (The isomorphism~(i) is the result of
  direct computation, while for the quasi-isomorphism~(ii) we can use
  an argument similar to the one used in Lemma~\ref{lem:Gamma_exact},
  part~(2) of the proof.) As $Y^{-}$ and~$Y^{+}$ are quasi-isomorphic
  to~$Y^{0}$, as shown above, this proves $\cY \simeq 0$ as required.
\end{proof}

\section{The splitting result}

\begin{theorem}
  Suppose that $R = \bigoplus_{k \in \bZ} R_{k}$ is a strongly
  $\bZ$\nbd-graded ring. There is a homotopy equivalence of
  $K$\nbd-theory spaces
  \begin{displaymath}
    K(R_{0}) \times
    K(R_{0}) \rTo K(\pp)
  \end{displaymath}
  induced by the functor
  \begin{align*}
    \Psi_{-1,0} + \Psi_{0,0} \colon \chp \times \chp & \rTo \ch \vb \\
    (C,D) & \rlap{\ \(\mapsto\)} \hphantom{\rTo} \,\Psi_{-1,0}(C)
            \oplus \Psi_{0,0} (D) \ .
  \end{align*}
\end{theorem}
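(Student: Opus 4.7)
The plan is to exploit two nested classes of weak equivalences on $\ch\vb_0$ (the quasi-isomorphisms $q$ and the coarser $q_0$-equivalences), apply \textsc{Waldhausen}'s fibration theorem, and identify both the base and the fibre of the resulting fibration with $K(R_0)$ via the functors $\Gamma$ and $\Psi_{-1,0}$, respectively. The functor $\Psi_{0,0}$ will then furnish a section that splits the fibration in the desired way.

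First, by Corollary~\ref{cor:reduce_to_vb0} we may replace $K(\pp)$ with $\Omega|q\sdot\ch\vb_0|$; by Corollary~\ref{cor:Psi_factors} the functor $\Psi_{-1,0}+\Psi_{0,0}$ lands in $\ch\vb_0$. We have inclusions $q\subseteq q_1\subseteq q_0$, the first by Lemma~\ref{lem:Gamma_exact} together with the exactness of twisting, while the equality $q=q_1$ follows from Proposition~\ref{prop:0_1__acyclic_is_trivial} applied to the mapping cone of a $q_1$-equivalence. Since the saturation, extension and cylinder axioms for both $q$ and $q_0$ were verified in Section~\ref{sec:weqs}, \textsc{Waldhausen}'s fibration theorem \cite[\S1.6]{MR802796} applies and produces a homotopy fibration
\[
K(\cA,\,q)\longrightarrow K(\ch\vb_0,\,q)\longrightarrow K(\ch\vb_0,\,q_0) \ ,
\]
where $\cA\subseteq\ch\vb_0$ denotes the full \textsc{Waldhausen} subcategory of $q_0$-acyclic complexes (which inherits cofibrations and is closed under cofibres by the exactness of $\Gamma$, Lemma~\ref{lem:Gamma_exact}).

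To identify the base, observe that $\Gamma\colon(\ch\vb_0,q_0)\to(\chp,\mathrm{qi})$ is exact and reflects weak equivalences by the very definition of $q_0$; moreover $\Gamma\circ\Psi_{0,0}=\id$ strictly by Lemma~\ref{lem:H_of_Psi}(1), while the counit of the adjunction in Lemma~\ref{lem:Psi_Gamma_adjoint} supplies a natural $q_0$-equivalence $\Psi_{0,0}\circ\Gamma\to\id$ (applying $\Gamma$ to it recovers the identity). Hence $\Gamma$ induces a homotopy equivalence $K(\ch\vb_0,q_0)\simeq K(R_0)$ with section $\Psi_{0,0}$. To identify the fibre, note first that $\Psi_{-1,0}$ factors through $\cA$ by Lemma~\ref{lem:H_of_Psi}(2), and that the exact functor $\Theta=\Gamma\circ\theta_{1,0}\colon\cA\to\chp$ (well defined by Theorem~\ref{thm:crucial_finiteness}) satisfies $\Theta\circ\Psi_{-1,0}=\id$ by Lemma~\ref{lem:H_of_Psi}(1). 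Twisting the adjunction counit by $\theta_{-1,0}$, using $\theta_{-1,0}\circ\Psi_{0,0}\iso\Psi_{-1,0}$, produces a natural transformation $\Psi_{-1,0}\circ\Theta\to\id_{\cA}$; applying $\Gamma$ to it returns the zero map (trivially an isomorphism, since both sides are acyclic on $\cA$), and applying $\Gamma\circ\theta_{1,0}$ returns the identity. The transformation is therefore a $q_1$-equivalence, hence a $q$-equivalence by Proposition~\ref{prop:0_1__acyclic_is_trivial}, and so $\Psi_{-1,0}$ induces $K(R_0)\simeq K(\cA,q)$.

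Combining these identifications, the fibration splits via the section $\Psi_{0,0}$ and yields the asserted homotopy equivalence $K(R_0)\times K(R_0)\simeq K(\pp)$, induced on the nose by $(C,D)\mapsto\Psi_{-1,0}(C)\oplus\Psi_{0,0}(D)$. The main obstacle is the identification of the fibre: one must construct the counit-derived natural transformation $\Psi_{-1,0}\Theta\to\id$ and, crucially, invoke the non-trivial Proposition~\ref{prop:0_1__acyclic_is_trivial} (equivalently, the equality $q=q_1$) to upgrade its evident $q_1$-equivalence status to a genuine quasi-isomorphism of complexes of sheaves.
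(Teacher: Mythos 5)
Your proof is correct and arrives at the same splitting, but it takes a somewhat different organisational route from the paper. The paper applies \textsc{Waldhausen}'s fibration theorem twice: once to obtain the fibration $|q\sdot\ch\vb_0^{q_0}|\to|q\sdot\ch\vb_0|\to|q_0\sdot\ch\vb_0|$ (identifying the base with $|h\sdot\chp|$ via $\Gamma$ and $\Psi_{0,0}$), and then a second time on the fibre term, comparing the classes $q$ and $q_1$ on $\ch\vb_0^{q_0}$, whereupon Proposition~\ref{prop:0_1__acyclic_is_trivial} makes the new fibre $\ch\vb_0^{q_1}=\ch\vb_0^q$ contractible, so that $\Gamma\circ\theta_{1,0}$ is read off as a homotopy equivalence. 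You instead apply the fibration theorem only once, and then identify the fibre $K(\cA,q)$ with $K(R_0)$ directly, by exhibiting $\Psi_{-1,0}$ and $\Theta=\Gamma\circ\theta_{1,0}$ as mutually inverse up to natural $q$\nbd-equivalence: you construct $\Psi_{-1,0}\circ\Theta\to\id$ by twisting the adjunction counit, observe it is a $q_1$\nbd-equivalence, and then upgrade it to a $q$\nbd-equivalence by applying Proposition~\ref{prop:0_1__acyclic_is_trivial} to its mapping cone. This is a valid shortcut: it trades the second fibration sequence for a mapping-cone argument and uses the same key input (Proposition~\ref{prop:0_1__acyclic_is_trivial}, the $(\Psi_{0,0},\Gamma)$ adjunction, Corollary~\ref{cor:Psi_factors}, Theorem~\ref{thm:crucial_finiteness}). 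The one thing worth stating more carefully is the step "$q=q_1$ on $\ch\vb_0$": you should note explicitly that the mapping cone of a map in $\ch\vb_0$ is again in $\ch\vb_0$ (levelwise it is a direct sum of the constituents, and $\vb_0$ is closed under direct sums), and that $\Gamma\circ\theta_{k,\ell}$, being additive, commutes with mapping cones — these are the facts that allow Proposition~\ref{prop:0_1__acyclic_is_trivial}, stated for a single acyclic complex, to yield the equivalence of weak-equivalence classes.
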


\begin{proof}
  By Corollary~\ref{cor:Psi_factors} the functor
  $\Psi_{0,0} + \Psi_{-1,0}$ factors through the category
  $\ch \vb_{0}$ which, in view of Corollary~\ref{cor:reduce_to_vb0},
  has the same $K$\nbd-theory (with respect to $q$\nbd-equivalences)
  as the category~$\ch \vb$. It is thus enough to show that the
  functor
  \begin{align*}
    \Psi_{-1,0} + \Psi_{0,0} \colon \chp \times \chp & \rTo \ch \vb_{0}\\%
    (C,D) & \rlap{\ \(\mapsto\)} \hphantom{\rTo} \Psi_{-1,0}(C) \oplus
            \Psi_{0,0} (D)
  \end{align*}
  induces a homotopy equivalence on $K$\nbd-theory spaces (with
  respect to quasi-iso\-mor\-phisms as weak equivalence in both source
  and target).
 
  \begin{claim}
    Let $q_{0}$ denote the category of $q_{0}$\nbd-equivalences
    in~$\vb_{0}$, see Definition~\ref{def:h_0_1}.  There is a
    fibration sequence
    \begin{equation}
      \label{eq:fib0}
      |q \sdot \ch \vb_{0}^{q_{0}}| \rTo |q \sdot \ch \vb_{0}|
      \rTo^{\Gamma} |h \sdot \chp| \qquad
    \end{equation}
    which has a section up to homotopy induced by the
    functor~$\Psi_{0,0}$.
  \end{claim}

  Indeed, every quasi-isomorphism in~$\ch \vb_{0}$ is a
  $q_{0}$\nbd-equi\-valence. The machinery of \textsc{Waldhausen}
  $K$\nbd-theory \cite[Theorem~1.6.4]{MR802796} thus provides us with
  a fibration sequence
  \begin{displaymath}
    |q \sdot \ch \vb_{0}^{q_{0}}| \rTo |q \sdot \ch \vb_{0}| \rTo
    |q_{0} \sdot \ch \vb_{0}| \ .
  \end{displaymath}
  The base is homotopy equivalent to $|h \sdot \chp|$, \textit{via}
  the functors~$\Gamma$ and~$\Psi_{0,0}$; in fact,
  $\Gamma \circ \Psi_{0,0} \iso \id$ by Lemma~\ref{lem:H_of_Psi}, 
  and the adjunction counit
  $\epsilon \colon \Psi_{0,0} \circ \Gamma \rTo \id$ (associated with
  the adjunction of Lemma~\ref{lem:Psi_Gamma_adjoint}) is a
  $q_{0}$\nbd-equivalence as application of~$\Gamma$ yields, up to
  canonical isomorphism, the identity transformation
  $\id = \Gamma(\epsilon) \colon \Gamma \circ \Psi_{0,0} \circ \Gamma
  \rTo \Gamma$.
  From $\Gamma \circ \Psi_{0,0} \iso \id$ we also conclude that
  $\Psi_{0,0}$ induces a section up to homotopy of the fibration
  sequence~\eqref{eq:fib0}. This proves the claim.

  \medbreak

  Since the fibration sequence~\eqref{eq:fib0} has a section up to
  homotopy, the associated long exact sequence of homotopy groups
  (which are \textsc{abel}ian groups in this case) decomposes into
  split short exact sequences
  \begin{multline*}
    0 \rTo \pi_k |q \sdot \ch \vb_{0}^{q_{0}}| \rTo \pi_k |q \sdot \ch
    \vb_{0}| \\ %
    \rTo^{\Gamma} \pi_k |h \sdot \chp| \rTo 0
  \end{multline*}
  so that there are group isomorphisms
  \begin{displaymath}
    \pi_k |q \sdot \ch \vb_{0}^{q_{0}}| \,\oplus\, \pi_k |h \sdot
    \chp| \rTo^{\lambda} \pi_k |q \sdot \ch \vb_{0}|
  \end{displaymath}
  with $\lambda$ induced by the functor
  $(\cY, \cZ) \mapsto \cY \oplus \Psi_{0,0} (\cZ)$. It follows that
  this functor yields a homotopy equivalence
  \begin{equation}
    \label{eq:he1}
    |q \sdot \ch \vb_{0}^{q_{0}}| \times |h \sdot \chp|
    \rTo[l>=5em]^{(\mathrm{incl}\ \Psi_{0,0})}_{\simeq} |q \sdot \ch
    \vb_{0}| \ .
  \end{equation}

  \smallbreak

  \noindent We proceed by analysing the fibre term of the fibration
  sequence~\eqref{eq:fib0}.

  \begin{claim}
    There is another fibration sequence, similar to the one above,
    \begin{equation}
      \label{eq:fib1}
      |q \sdot \ch \vb_{0}^{q}| \rTo |q \sdot \ch \vb_{0}^{q_{0}}|
      \rTo[l>=4em]^{\Gamma \circ \theta_{1,0}} |h \sdot \chp|
    \end{equation}
    which has a section up to homotopy induced by the
    functor~$\Psi_{-1,0}$.
  \end{claim}
  
  Indeed, let $q_{1}$ denote the category of $q_{1}$\nbd-equivalences
  in~$\vb_{0}^{q_{0}}$, see Definition~\ref{def:h_0_1}. The general
  $K$\nbd-theory machinery provides a fibration sequence
  \begin{displaymath}
    |q \sdot \ch \vb_{0}^{q_{1}}| \rTo |q \sdot \ch
    \vb_{0}^{q_{0}}| \rTo |q_{1} \sdot \ch \vb_{0}^{q_{0}}| \ .
  \end{displaymath}
  Again, the base is homotopy equivalent to~$|h \sdot \chp|$
  \textit{via} the functors $\Gamma \circ \theta_{1,0}$
  and~$\Psi_{-1,0}$; note that $\Gamma \circ \Psi_{-1,0} = 0$ by
  Lemma~\ref{lem:H_of_Psi} 
  so that
  $\Psi_{-1,0}$ takes indeed values in~$\vb_{0}^{q_{0}}$. We have
  $\big( \Gamma \circ \theta_{1,0} \big) \circ \Psi_{-1,0} = \Gamma
  \circ \Psi_{0,0} \iso \id$, and the adjunction counit
  \begin{displaymath}
    \Psi_{-1,0} \circ \big(\Gamma \circ \theta_{1,0}
    \big) \rTo \id
  \end{displaymath}
  is transformed, by application of~$\Gamma \circ \theta_{1,0}$, into
  the identity map. So there results a fibration sequence
  \begin{equation}
    \label{eq:1}
    |q \sdot \ch \vb_{0}^{q_{1}}| \rTo |q \sdot \ch
    \vb_{0}^{q_{0}}| \rTo[l>=4em]^{\Gamma \circ \theta_{1,0}} |h \sdot
    \chp| \ ,
  \end{equation}
  and the isomorphism
  $\big( \Gamma \circ \theta_{1,0} \big) \circ \Psi_{-1,0} \iso \id$
  shows that $\Psi_{-1,0}$ provides a section up to homotopy. The
  category $\ch \vb_{0}^{q_{1}}$ in the fibre term consists of those
  complexes of vector bundles $\cY \in \vb_{0}$ such that $\cY \rTo 0$
  is a $q_{1}$\nbd-equivalence, that is, by
  Proposition~\ref{prop:0_1__acyclic_is_trivial}, such that
  $\cY \simeq 0$. Thus we have an equality of categories
  $\ch \vb_{0}^{q_{1}} = \ch \vb_{0}^{q}$, and the fibration
  sequence~\eqref{eq:1} is actually the desired
  sequence~\eqref{eq:fib1}. Its fibre term is contractible. This means
  that $\Gamma \circ \theta_{1,0}$ induces a homotopy equivalence;
  consequently, the section up to homotopy, induced by~$\Psi_{-1,0}$,
  is a homotopy equivalence as well.

  Combining this homotopy equivalence with~(\ref{eq:he1}) yields a
  homotopy equivalence
  \begin{displaymath}
    |h \sdot \chp| \times |h \sdot \chp| \rTo[l>=6em]^{(\Psi_{-1,0} \
      \Psi_{0,0})}_{\simeq} |q \sdot \ch \vb_{0}| \ .
  \end{displaymath}
  \goodbreak
  Looping once and combining with the homotopy equivalence
  \begin{displaymath}
    \Omega |h \sdot \ch \vb_{0}| \rTo^{\simeq} \Omega |h \sdot \ch
    \vb| = K(\pp)
  \end{displaymath}
  from Corollary~\ref{cor:reduce_to_vb0} gives the advertised
  splitting result.
\end{proof}

\vspace{2 em}

\raggedright


\end{document}